\numberwithin{equation}{section}
\def \IN{\mathbb N}
\def \ik{\mathcal K}
\def \IZ{\mathbb Z}
\def \IR{\mathbb R}
\def \fs{\mathfrak{S}}
\def \mfG{{\mathfrak G}}
\def \mfq{{\mathfrak q}}
\newcommand{\gre}{{\epsilon}}
\newcommand{\grz}{{\zeta}}
\newcommand{\grl}{{\lambda}}
\newcommand{\arw}{\rightarrow} %(Example: f: \RR \arw \RR)
\newcommand{\D}{\displaystyle}
\newcommand{\mb}{\mathbf}
\newcommand{\Hil}{\mathscr{H}}
\newcommand{\alg}{\mathscr{L}}
\newcommand{\art}{\ar@{-}}
\newtheorem{main}{Theorem}[section]
\newtheorem{Intro_thm}[main]{Theorem}
\newtheorem{step1}[main]{Lemma}
\newtheorem{sl_cor}[main]{Corollary}
\newtheorem{ad_cor}[main]{Corollary}
\newtheorem{compact}[main]{Lemma}
\newtheorem{cartan}[main]{Lemma}
\newtheorem{central}[main]{Theorem}
\newtheorem{planch}[main]{Theorem}
\newtheorem{mainexp}[main]{Theorem}
\newtheorem{dens}[main]{Definition}
\newtheorem{corr}[main]{Corollary}
\newtheorem{corrr}[main]{Corollary}
\newtheorem{arch1}[main]{Corollary}
\newtheorem{broadest}[main]{Theorem}
\begin{document}
\title{Effective multiple mixing in semidirect product actions}
\author{Ioannis Konstantoulas}
\begin{abstract}We prove effective decay of certain multiple correlation 
coefficients for measure preserving, mixing Weyl chamber actions of semidirect products 
of semisimple groups with $G$-vector spaces.  These estimates provide decay for some 
semisimple groups of higher rank.
\end{abstract}
\maketitle
\tableofcontents
\section{Introduction}\label{introd}
Ergodicity and various notions of mixing constitute the measure theoretic 
incarnation of statistical uniformity in measurable dynamics.  The 
relationships among those notions are not completely understood; it is not 
known, for example, if every mixing map $T$ on a measure space $X$ is mixing 
of all orders.  For $\IZ^d$-actions, $d\geq 2$, Ledrappier gave a counterexample 
in \cite{Led} later generalized by Schmidt (see 
\cite{Sch}) through the introduction of certain algebraic dynamical systems.  
General $\IR^d$-actions can also have a wide range of behaviors depending on 
the nature of the action and properties of the object acted upon.

In the context of actions of a highly noncommutative group (such as a 
semisimple group) the situation changes 
drastically.  Many topological and measurable distribution phenomena follow 
not from the peculiarities of the action, but rather on the nature of the 
acting group.  This is 
manifest in the following two basic results: the Howe - Moore vanishing 
theorem \cite{HM} which implies that every ergodic action of such a group is 
mixing and Mozes's theorem \cite{Moz} stating that for well behaved 
semisimple Lie groups, mixing implies multiple mixing of all orders.  It is 
the rich geometry and non trivial interplay between certain subgroups of 
semisimple groups that lies at the heart of both these results and accounts 
for this exceptional behavior of semisimple groups.  For example, Mautner's 
phenomenon (see \cite{Bekka}) in various guises plays a major role in both 
cited results.  A comprehensive survey emphasizing this interplay can be 
found in \cite{Starkov}.

 Mozes's theorem relies on geometric considerations concerning how the group 
acts on a certain space of measures.  It uses compactness in an essential 
way, preventing an immediate quantitative refinement even with specific 
hypotheses on the action and the target space.  The Howe - Moore theorem, on 
the other hand, is essentially a representation theoretic result; for this 
reason, it is possible to make quantitative by utilizing the $L^2$-theory and 
a clever application of tensor products.
 
 The next question that arises is whether we can give a quantitative form of 
Mozes's theorem whose asymptotics agree with Howe - Moore for $2$-mixing.  
More explicitly, let $G$ be a suitable group with a measure 
preserving action on a probability measure 
space $(X,\mu)$ denoted $g\cdot x$.  For any $k+1$-tuple $f_i \in L_0^\infty(X)$
(bounded, zero mean functions) form the correlation integral
\begin{equation*}
\int_X f_0(x)f_1(g^{-1}_1\cdot x)\cdots f_{k}(g^{-1}_k \cdot x)d\mu(x)
\end{equation*}

The task is to bound this integral in terms of data (as explicit as 
possible) intrinsic to the $f_i$, to the acting group $G$ and to a given 
notion of growth of the acting tuple $(g_i)$.  This paper is mainly concerned 
with the Weyl chamber action of groups $\mfG$ where $\mfG$ is the semidirect product of a semisimple group $G$ 
with a vector space by means of a representation of one on the other.  Such 
groups are naturally found as subgroups of semisimple groups (but are not 
restricted to such a role) and we exploit this inclusion to get quantitative 
decay for some semisimple groups of higher rank.  Since 
the precise statement of our result is too technical
to give in the introduction, we will give a simplified description here and refer
to Section 4 for the full version.  

Note that this paper relies heavily on 
the work of Wang in \cite{Wang} and cannot possibly be made self contained 
without copying that work verbatim.  References to proofs in \cite{Wang} will 
abound and the reader is advised to consult it for verifying several 
statements made here.

Following Wang we let $G$ be a connected semisimple almost algebraic (see
section \ref{defs} for precise definitions) $\ik$-group where $\ik$ is a local field 
of characteristic zero
together with a $\ik$-rational representation $\rho: G \arw \textrm{GL}(V)$ 
satisfying certain
conditions.  Let the group $G\ltimes_{\rho}V$ act on a probability space 
$(X,\mu)$ via measure-preserving transformations so that the action is 
mixing; thus we obtain a unitary representation
on $L^2(X)$ that distributes over pointwise (a.e.) products of functions.  Restricting
it to $G\ltimes_{\rho}\mb{0}$ we get a (still mixing) action of the semisimple group
$G$; relative to a Cartan decomposition $G=KD^+FK$ we let $k+1$ elements 
$$\mb{a}^i\in D^+$$ of $G$ with $\mb{a}^0 = I$ act on $k+1$ 
bounded, zero mean, $K$-finite functions $f_i$ of bounded spectral support
\footnote{See section \ref{defs} for definitions.}.  
Define 
$$\mathfrak{R}(\mb{a}) = \left|\sum_{i=1}^{k-1} 
\lambda(\frac{\mb{a}^k}{\mb{a}^i}) \right|\left|\sum_{i=1}^k 
\varrho(\mb{a}^i)^{-1} \right| $$
where $\varrho$ and $\lambda$ are lowest and highest weights respectively for the representation $\rho$.
Our main result (\ref{centralt}) will imply the following estimate:
\begin{Intro_thm}
Let $\mb{a}^i$ be as above; there exists an $L^2$-dense subspace $\mathcal{D}$
 of bounded, zero-mean, $K$-finite functions and constants $C,C'$ depending 
only on the action\footnote{By 'action' we will refer to all data involved in 
it, including the group structure.  The independence is on the functions in the correlation integral.} so that if 
$$\min(\min_{i=0,\cdots,k-1}|\lambda(\frac{\mb{a}^k}{\mb{a}^i})|,\min_{i=1,\cdots,k}|\varrho(\mb{a}^i)^{-1}|)>C',$$ $f_i$ are in $\mathcal{D}$ and 
$d_i=\textrm{dim}\langle K\cdot f_i\rangle$, we have the bound
\begin{equation}
 \left|\int_X f_0(x)f_1((\mb{a}^1)^{-1}\cdot x)\cdots f_{k}((\mb{a}^k)^{-1} \cdot x)d
\mu(x)\right| \leq s^{2\mathfrak{q}} C\prod_i ||f_i||_\infty d_i \mathfrak{R}(\mb{a})^{-\frac{\mathfrak{q}}{2}}
\end{equation}
where $s$ depends on the functions\footnote{Explicitly, $s$ is such that all the $f_i$ lie
in the image $P_s(\alg_{0,K})$ for the approximate projection $P_s$ defined 
in section \ref{defs}.} $f_i$ and the action; the exponent $\mathfrak{q}$ only depends on the action.
\end{Intro_thm}
 For the action on 
smooth vectors we will get bounds for a wider class of functions defined 
by the finiteness of certain Sobolev norm (i.e. a norm involving 
derivatives of various orders in certain directions in the group) for the 
functions $\sigma(g)\cdot f:G \arw L^\infty(X)$.

Note that if a semisimple group of higher rank has an adequate number 
of subgroups that are semidirect products of the form above (for example, 
covering the entire Cartan action), we can 
extend the bounds to those groups.  We will give the extension briefly for 
the group $\textrm{SL}(n,\ik)$ in Section \ref{SLN}, following the demonstration 
given in \cite{HT}, and discuss how to get such bounds in some generality in Section \ref{simple_groups}.

Our method was inspired by a proof of quantitative decay 
given in \cite{HT}.  Initially we had worked out the case $SL(n,\IR)$ only, 
when the paper of Zhenqi Jenny Wang \cite{Wang} was brought to our attention 
which generalized computations in 
\cite{HT}; this led us to expand the setting and try to isolate parts of the 
computation that can work in more general settings.  Apart from methods, we 
borrow many notations and notions from these expositions, so the reader is 
advised to consult them for further reference.  For complete proofs of 
unproven statements found here and in Howe-Tan's book, see \cite{Lang}, 
chapters 1, 2 and Appendices 1-3.

While this paper was being written, I was notified that similar results were 
obtained by Bjorklund, Einsiedler and Gorodnik in \cite{BEG}; that 
work treats the full action of semisimple groups over local fields and adeles 
acting on suitable homogeneous spaces.  It covers actions of rank $1$ groups with spectral gap as well as 
nonsplit groups, two situations that that we cannot treat here, and provides 
uniform estimates for Sobolev vectors.  Their method is dynamical in nature 
and examines the quantitative properties of the orbit of the correlation 
measures by the acting elements; in that sense, it is close to the spirit of 
Mozes' original method.  In contrast, our method is spectral and 
builds on consecutive approximations by nicely behaved functions.  Since semidirect products are our 
main focus, for the application of our estimates to semisimple groups we need these 
groups to contain sufficiently many semidirect products.  This excludes 
groups of split rank at most $1$ and almost direct products of such groups.

\textbf{Acknowledgements: }  I would like to thank my adviser Jayadev Athreya 
for introducing me to this problem and providing the crucial materials this 
work was based on.  Furthermore, I would like to thank Han Li for useful 
observations about this work.    Finally, I would like to thank professor Alexander 
Gorodnik for his reading of versions of this paper, making corrections and suggesting 
significant improvements.

\section{Set up and central notions}\label{defs}
In the following sections, we lay down notation, the central objects of study 
and the tools we will use in the proofs.

\subsection{Semidirect products and excellent representations}\label{defs_semidirect}

Let $\ik$ be a local field of characteristic zero, i.e. the real or complex 
numbers or a finite extension of a $p$-adic field.  Having fixed $\ik$, whenever
we talk about an algebraic group as a functor we will indicate it by a tilde, e.g.
$\tilde{G}$.  Then $G$ will denote the group of $\ik$-rational points of $\tilde{G}$.

Let $G$ be the group of $\ik$-rational points of a connected semisimple algebraic
\footnote{In the case $\ik=\IR$ one can take $G$ to be any semisimple Lie 
group and our results are true in that generality.  We will restrict to 
algebraic groups for simplicity of notation.  See \cite{Wang} for the 
modifications that need to be made to accommodate this case.} group 
$\tilde{G}$.  Let $\tilde{D}$ be a maximal $\ik$-split torus and $\tilde{B}$ 
a minimal parabolic containing $\tilde{D}$. Write $\mb{X}(\tilde{D})$ for the 
characters of $\tilde{D}$ defined over $\ik$.  The choice of a parabolic 
group $B$ determines an ordering of the characters; 
let $\mb{X}^+$ be the set of positive characters with respect to the given 
ordering.

As in \cite{Wang} let $$\ik^0 = \{x\in \IR| x\geq 0\}\quad\textrm{ and }\quad 
\overline{\ik} = \{x\in \IR| x\geq 1\}$$ when $\ik$ is Archimedean.  When 
$\ik$ is non-Archimedean, we fix a uniformizer $q$ with $|q|^{-1}$ the 
cardinality of the residue field of $\ik$.  Then correspondingly
$$\ik^0 = \{q^n| n\in \IZ\}\quad\textrm{ and }\quad\overline{\ik} = \{q^{-n}| n\in \IN\}.$$
Define subgroups $D^0$ and $D^+$ of $D$ by
$$D^0 = \{d\in D | \chi(d) \in \ik^0\textrm{ for each }\chi\in \mb{X}(\tilde{D})\},$$
$$D^+ = \{d\in D | \chi(d) \in \overline{\ik}\textrm{ for each }\chi\in \mb{X}^+\}.$$
We call $D^+$ the positive Weyl chamber in $D$ (relative to the prescribed data).

Next, we denote the centralizer of $\tilde{D}$ in $\tilde{G}$ by $\tilde{Z}$ and transfer
the ordering of $\mb{X}(\tilde{D})$ to $\mb{X}(\tilde{Z})$ by inclusion.  Let
$$Z_+ = \{z\in Z | |\chi(z)|\geq 1\textrm{ for each }\chi \in \mb{X}(Z)^+\}$$ and
$$Z_0 = \{z\in Z | |\chi(z)|= 1\textrm{ for each }\chi \in \mb{X}(Z)^+\}.$$  Note
that these are $\ik$-subgroups of the $\ik$-group $Z$.  We then have the 
following decomposition (see \cite{Bor} and the discussion in \cite{Wang}):
\begin{cartan}
There exists a good maximal compact subgroup $K$ of $G$ such that
\begin{enumerate}
\item $N_G(D)\subset KD$.
\item We have the decomposition $G=K(Z_+/Z_0)K$ such that for each $g\in G$, there exists a unique
element $z$ of $Z_+$ modulo $Z_0$ so that $g\in KzK$.
\item There exists a finite subset $F\subset C_G(D)$ so that $G = K(D^+F)K$ and for each $g\in G$
there exist unique $d\in D^+$ and $f\in F$ so that $g\in KdfK$.
\end{enumerate}
\end{cartan}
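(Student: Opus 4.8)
The plan is to reduce the statement to the known structure theory of reductive groups over local fields, organising the argument around the maximal $\ik$-split torus $\tilde D$ and its centraliser $\tilde Z = C_{\tilde G}(\tilde D)$, and treating the Archimedean and non-Archimedean cases in parallel. First I would fix $K$. When $\ik$ is Archimedean, I choose a Cartan involution $\theta$ of $G$ compatible with $D$, in the sense that $\theta$ restricts to $d\mapsto d^{-1}$ on $D$ (such a $\theta$ exists after conjugating an arbitrary Cartan involution by an element carrying its maximal split torus onto $D$), and set $K=G^\theta$. When $\ik$ is non-Archimedean, I take $K$ to be the stabiliser in $G$ of a special vertex $x_0$ in the apartment $\mathcal A$ of the Bruhat--Tits building of $G$ corresponding to $D$. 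In both cases $K$ is a good maximal compact subgroup, and it is precisely for this class of subgroups that the Iwasawa and Cartan decompositions hold; these facts, together with the finiteness and compactness statements used below, I would quote from \cite{Bor} and Bruhat--Tits theory rather than reprove, as the excerpt already signals.

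For (1), the relative Weyl group $W=N_G(D)/Z$ is finite, and one can choose representatives of $W$ inside $K$: in the Archimedean case by taking them $\theta$-fixed, in the non-Archimedean case because the finite Weyl group of $\mathcal A$ is realised by elements of $G$ fixing $x_0$. The centraliser $Z$ is a reductive group whose maximal $\ik$-split torus is $D$, so its group of points satisfies $Z = D\cdot M = M\cdot D$ with $M$ compact (the points of the $\ik$-anisotropic kernel, which centralise $D$); arranging $M\subset K$ compatibly with the choices above gives $Z\subset KD$. Writing $N_G(D)=\bigsqcup_{w\in W}\tilde w\,Z$ with $\tilde w\in K$ and using that $K$ is a group, we obtain $N_G(D)\subset KZ\subset K\cdot(KD)=KD$.

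For (2), I invoke the Cartan decomposition in the form adapted to a good maximal compact, $G=KZ_+K$, in which the double coset $KgK$ determines a well defined orbit of the finite group $W$ acting (linearly, through its action on $D$) on the abelian group $Z/Z_0$, where $Z_0$ is the unique maximal compact subgroup of $Z$. The one remaining point is that $Z_+/Z_0$ is a set of representatives for $W\backslash(Z/Z_0)$; this is the familiar fact that the dominant cone meets each orbit of a finite reflection group exactly once, applied here to the $W$-module $Z/Z_0$, and it gives at once the existence and uniqueness of $z$ in part (2).

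Finally, for (3), the passage from $Z_+/Z_0$ to $D^+F$ is a finite-index bookkeeping step: the restriction map $\mb X(\tilde Z)\to\mb X(\tilde D)$ is injective with finite cokernel and the transported orderings cut out the same cone, so $DZ_0$ has finite index in $Z$ and the image of $D^+$ in $Z_+/Z_0$ has finite complement; choosing coset representatives $F$ for that complement inside $C_G(D)=Z$ and normalising each within its $Z_0$-coset, one arranges that every class of $Z_+/Z_0$ is uniquely of the form $df$ with $d\in D^+$ and $f\in F$, so that (2) upgrades to $G=K(D^+F)K$ with the stated uniqueness. The part I expect to be genuinely delicate is not any single computation but keeping the two local-field cases running side by side while holding $K$ simultaneously good, compatible with $D$, with the Weyl representatives, and with the anisotropic kernel $M$ --- and, in the last step, producing $F$ inside $C_G(D)$ without destroying the uniqueness inherited from (2).
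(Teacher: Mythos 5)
The first thing to say is that the paper does not prove this lemma at all: it is stated as a known structural result and attributed to \cite{Bor} and the discussion in \cite{Wang}, so there is no ``paper proof'' to compare against, and your attempt is really a reconstruction of what those references contain. Your choices of $K$ (fixed points of a Cartan involution adapted to $D$ in the Archimedean case, stabiliser of a special vertex in the apartment of $D$ in the non-Archimedean case) and the overall architecture --- Weyl representatives in $K$, Cartan decomposition $G=KZ_+K$, dominant cone meeting each $W$-orbit in $Z/Z_0$ exactly once, then finite-index bookkeeping to pass from $Z_+/Z_0$ to $D^+F$ --- are the standard route and match what one finds in the cited sources.

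There is, however, a genuine flaw in your argument for part (1). You assert $Z=D\cdot M$ with $M$ compact, $M$ being the points of the anisotropic kernel, and deduce $Z\subset KD$. Over a non-Archimedean field this factorisation fails for non-split groups: what is true is only that $Z/D$ is compact and that $DZ_0$ has \emph{finite index} in $Z$, which is exactly what you (correctly) use in part (3) to justify the existence of a nontrivial finite set $F$. The two halves of your write-up are therefore inconsistent with each other: if $Z=MD$ with $M\subset K$ compact then the cosets making up $F$ would be absorbed into $Z_0$ and $F$ would be superfluous, while if $F$ is genuinely needed then its elements lie in $Z\setminus DZ_0$, are unbounded, and cannot be written as $kd$ with $k$ in a compact group (their image in $Z/Z_0$ is nonzero yet $K\cap Z\subset Z_0$), so $Z\not\subset KD$. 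Concretely, a ramified quasi-split $\mathrm{SU}(2,1)$ already exhibits this. You should either restrict the argument for (1) to the split and Archimedean cases (where $Z=Z_0D$ does hold and your argument is fine), or replace the target by $N_G(D)\subset KZ$ --- which is what the representative-in-$K$ argument for the relative Weyl group actually delivers and what the rest of the paper uses --- and flag that the literal containment $N_G(D)\subset KD$ as printed needs the same caveat. The sketches of parts (2) and (3) are acceptable at the level of detail the paper itself operates at, though in (3) you should make explicit that $D\cap Z_0=D_0$ and that the transported ordering cuts out compatible cones, so that the bijection $D^+\times F\to Z_+/Z_0$ really is well defined.
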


Recall that any semisimple group is the almost direct product of its almost 
simple factors, $G = \prod G_i$.  We will assume that no factor is compact, 
although this can be avoided at the expense of making the statements of the 
theorems more complicated.  We opt for simplicity.  

Now let $\rho: G\arw \textrm{GL}(V)$ be a representation on 
a finite dimensional $\ik$-vector space $V$ with the following properties:
\begin{enumerate}
\item $\rho$ is continuous when $\ik=\IR$ and $\ik$-rational in all other cases;
\item for each almost simple factor $G_i$, the only $\rho(G_i)$-fixed point in $V$ is $\mb{0}$.
\end{enumerate}
Such representations are called excellent in \cite{Wang}.  We will also 
assume once and for all that $\textrm{ker}(\rho)< Z(G)$, i.e. that the 
representation is not far from faithful.  By means of this 
representation we define the main acting object of this work: let
\begin{equation}
\mathfrak{G} = G \ltimes_\rho V
\end{equation}
be the semidirect product of $G$ with $V$ by means of $\rho$.  This is a $\ik$
-group whose unipotent radical over $\ik$ coincides with $V$.  Since $G$ is 
semisimple and $\textrm{char}(\ik)=0$, the representation $\rho$ is 
completely reducible and thus $V$ breaks into irreducible components $$V = 
\oplus_{i=1}^N V_i.$$  Now we introduce notation involving $V$ and $\rho$:
\begin{enumerate}
\item $||\cdot||$ denotes a $K$-invariant norm on $V$.
\item The restriction of $\rho$ on $V_i$ is denoted by $\rho_i$.
\item $\Phi_i$ is the set of weights of $\rho_i$ with respect to $D$ on $V_i$.
\item $\lambda_i$ resp. $\varrho_i$ are the highest resp. lowest weights of $\rho_i$.
\item For each $i$ and each weight $w$ of $\rho_i$, $V_w$ is the 
corresponding weight subspace of $V_i$ (there will be no problem 
distinguishing irreducible components).
\item $\Phi$ is the set of roots of $G$ with respect to $D$.
\item For each $\omega \in\Phi$, denote by $\mathfrak{g}_\omega$ the root 
space corresponding to the root.
\item $\delta_B$ is the modular function of the Borel subgroup $B$ that 
determines the ordering on $\Phi$.
\item $\{\omega_1,\cdots,\omega_n\} \subset \Phi^+$ is the set of simple 
roots in $\Phi^+$.
\item $\mfq_i:= \left(\frac{1}{3}\right)^{\# \Phi_i-1}$ if 
$\textrm{dim}{V_{\grl_i}}>1$, otherwise $\mfq_i:= 
\left(\frac{1}{3}\right)^{\# \Phi_i-2}$.
%\item $\Lambda(\Phi_i) = \frac{\mfq_i}{2}(\grl_i-\grr_i)$.
%\item $\D{p(G,V_i,\Phi_i) = \max_{j=1,\cdots,n}{\frac{\textrm{coefficient of 
%}\omega_j\textrm{ in }\delta_B}{\textrm{coefficient of }\omega_j\textrm{ in 
%}\Lambda(\Phi_i)}}}$
%\item $p(G,V,\Phi) = \max_{i}(p(G,V_i,\Phi_i))$
\end{enumerate}
More details about the aspects of root systems and weights 
we will use can be found in \cite[Section 3]{Wang} and the references 
therein.  The concepts listed above will play a role in the explicit bounds 
we will give for the correlations that we will now introduce.

\subsection{Actions and unitary representations}\label{defs_unitary}

Let $(X,\mu)$ be a probability space, $\Hil=L_0^2(X)$ the Hilbert space of 
square integrable functions on $X$ orthogonal to the constants, 
$\langle\,\rangle$ the inner product, $\alg = L_0^\infty(X)\subset \Hil$ and 
$\sigma$ a measure-preserving, mixing action of $\mfG$ on $X$; we always use the 
notation $g\cdot x$ for $\sigma(g)(x)$.  The action on $\Hil$ defined by 
$$(g\cdot f)(x):=f(g^{-1}\cdot x) $$ is a unitary representation of $G$; we 
call $g\cdot f$ a translate of $f$ by $g$, suppressing mention of the action.  
Note that the representation is multiplicative, i.e. it distributes over 
pointwise (and a.e. pointwise) products of functions: $$g\cdot(fh) = (g\cdot 
f)(g\cdot h).$$  Furthermore, for each irreducible component $V_i$ of $V$, 
the representation $\sigma|_{V_i}$ has no fixed vectors in $\Hil$.  A fixed 
vector would give an invariant matrix coefficient on $\Hil$, but mixing 
implies the decay of all such coefficient.

For any linear space of functions on $X$, a subscript $K$ denotes $K$-finite 
functions in that space.  A function $f$ is called $K$-finite if the space 
$$\langle K\cdot f\rangle\subset \Hil$$ is finite dimensional.   We will deal especially 
with the algebra of $K$-finite bounded functions $\alg_K$ and $L^2$-dense 
subspaces in it.  As the representation induced on $L^2(X)$ by the action is 
unitary, by the Peter - Weyl theorem $K$-finite vectors are dense in $L^2(X)$.

\subsection{Form of the bounds}\label{defs_form}
After this setup, we can now describe the kind of correlations we will bound 
and in what way.  Recall the notation in the previous section.  Let $f_i$ be 
bounded, $K$-finite functions on $X$.  For simplicity, assume that $V=V_1$ so 
$\rho$ is irreducible.  Abbreviate $\varrho_1$ and $\lambda_1$ by $\varrho$ 
and $\lambda$.  Of course, in the general case we can restrict to 
each irreducible component, obtain bounds there and choose the best.  The 
mixing property provides us this luxury\footnote{So we see that all we need 
in fact is that there are no invariant vectors in $\Hil$ for any $V_i$.}. 

Let $\mb{a}^i\in D^+$ be the acting elements; for uniformity, define 
$\mb{a}^0 = \mb{\textrm{I}}$.  Order the elements according to their $\lambda$
 values, i.e. $|\lambda(\mb{a}^i)|>|\lambda(\mb{a}^j)|$ for $i>j$.

\begin{mainexp}\label{desc}
With notations as above, there exists an $L^2$-dense subspace $\mathcal{D}$ of bounded, zero mean, $K$-finite vectors such that, for \begin{equation} \min(\min_{i=0,\cdots,k-1}|\lambda(\frac{\mb{a}^k}{\mb{a}^i})|,\min_{i=1,\cdots,k}|\varrho(\mb{a}^i)^{-1}|)>C',\end{equation}\label{norms} $C'$ independent of the $f_i$,
\begin{eqnarray}
& & \int_X f_0(x)\,\mb{a}^1\cdot f_1( x)\,\cdots \,\mb{a}^k \cdot f_{k}(x)\,d\mu(x) \nonumber\\
& & \quad \leq s^{2\mfq} C \prod_i ||f_i||_{\infty} \textrm{dim}\langle K\cdot f_i\rangle \left|\sum_{i=1}^{k-1} \lambda(\frac{\mb{a}^k}{\mb{a}^i}) \right|^{-\frac{\mfq}{2}}\left|\sum_{i=1}^k \varrho(\mb{a}^i)^{-1} \right|^{-\frac{\mfq}{2}}
\end{eqnarray}
where $s$ is a parameter depending only on the $f_i$ and $C$ depends only on $G$ and the action, but not the $f_i$.
\end{mainexp}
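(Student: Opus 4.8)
The plan is to build the estimate by iterating a single-step decay mechanism, following the tensor-power philosophy of Howe--Tan and Wang. The key observation is that the correlation integral can be recast as an inner product $\langle f_0, \mb{a}^1\cdot f_1 \cdots \mb{a}^k\cdot f_k\rangle$ in $\Hil = L^2_0(X)$, and that pointwise multiplicativity of the representation lets us ``unfold'' the product one factor at a time. First I would reduce to the irreducible case $V=V_1$ as indicated, and set up the approximation machinery: the functions in $\mathcal{D}$ will be exactly those lying in the image of the approximate projection $P_s$ acting on $\alg_{0,K}$, so that each $f_i$ is well approximated by a vector whose $D$-spectral support (with respect to the weights of $\rho$) is controlled by the parameter $s$. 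This is where the factor $s^{2\mfq}$ and the dependence of $C'$ on the action (but not on the $f_i$) will enter.

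The core of the argument is a single-variable decay lemma of the following shape: for a vector $v$ of bounded spectral support and an element $\mb{a}\in D^+$ with $|\lambda(\mb{a})|$ and $|\varrho(\mb{a})^{-1}|$ large, the matrix coefficient $\langle u, \mb{a}\cdot v\rangle$ decays like a power of $|\lambda(\mb{a})|$ or $|\varrho(\mb{a})^{-1}|$, with exponent governed by $\mfq$ and the constant by the norms and dimensions of the $K$-finite spans of $u$ and $v$. The mechanism here is the Mautner phenomenon inside $\mfG = G\ltimes_\rho V$: conjugating the vector group $V$ by $\mb{a}$ contracts or expands along the weight spaces $V_w$ according to $|w(\mb{a})|$, and since $\sigma|_{V_i}$ has no invariant vectors in $\Hil$, one gets quantitative decay of the relevant coefficients from the $L^2$-theory of the unipotent part (as in Wang's computations with root spaces and weight spaces). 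The two ``directions'' $\lambda$ and $\varrho$ (highest and lowest weights) are what force the appearance of both $|\sum \lambda(\mb{a}^k/\mb{a}^i)|$ and $|\sum \varrho(\mb{a}^i)^{-1}|$: the highest-weight direction is used to separate $f_k$ (the extreme translate) from the rest, while the lowest-weight direction handles the cumulative effect of the remaining translates.

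Granting this lemma, I would run an induction on $k$. Write the $(k+1)$-fold correlation as $\langle f_0\cdot(\mb{a}^1\cdot f_1)\cdots(\mb{a}^{k-1}\cdot f_{k-1}),\, \mb{a}^k\cdot f_k\rangle$; the left slot is a bounded function with a controlled $K$-finite span (dimension multiplies, $L^\infty$-norms multiply), so the single-step lemma gives decay in $|\lambda(\mb{a}^k/\mb{a}^i)|$ after translating everything by $(\mb{a}^k)^{-1}$ and using the ordering $|\lambda(\mb{a}^i)|<|\lambda(\mb{a}^k)|$ for $i<k$. One then peels off $f_k$, is left with a $k$-fold correlation for the elements $(\mb{a}^k)^{-1}\mb{a}^i$ (equivalently a shorter Weyl-chamber tuple), and recurses; the lowest-weight factor $|\sum\varrho(\mb{a}^i)^{-1}|$ is accumulated across the recursion, and the exponents are arranged so that splitting $\mfq$ into $\mfq/2 + \mfq/2$ survives the whole process. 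The main obstacle, and the step I expect to be delicate, is the single-step decay lemma in the presence of only approximate spectral projections: one must control how the error terms from $P_s$ propagate through the product (they are multiplied by other bounded translates, so $L^\infty$-control is essential) and ensure the threshold $C'$ and the exponent $\mfq$ can be chosen uniformly over $\mathcal{D}$ --- this is exactly the point where the bookkeeping of weights $\Phi_i$, the quantities $\mfq_i$, and the structure of $\mfG = G\ltimes_\rho V$ from Section~\ref{defs} must be combined carefully, and it is essentially the technical heart inherited from \cite{Wang} and \cite{HT}.
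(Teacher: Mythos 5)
Your inventory of ingredients is right (approximate projections $P_{s,l}$, multiplicativity of $\sigma$, Wang's cone estimates, the roles of $\lambda$ and $\varrho$), but the architecture you propose --- an induction on $k$ with a per-step decay factor --- is not how the argument can be made to work, and as described it has a genuine gap. The problem is twofold. First, once you write the correlation as $\langle f_0\cdot(\mb{a}^1\cdot f_1)\cdots(\mb{a}^{k-1}\cdot f_{k-1}),\,\mb{a}^k\cdot f_k\rangle$ and apply Cauchy--Schwarz (or any single-step matrix-coefficient bound), the integral is consumed: you obtain a product of two $L^2$-norms, not a decay factor times a shorter $k$-fold correlation, so there is nothing left to recurse on. Second, your single-step lemma applies to vectors of \emph{bounded} spectral support, but the left slot --- a product of $k$ translates --- has spectral support in the large sumset $\sum_{i<k}\rho^*(\mb{a}^i)(\textrm{Ann}(s'))$, which is not contained in any fixed annulus; the lemma does not apply to it directly. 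Relatedly, the claim that the lowest-weight factor is ``accumulated across the recursion'' does not match the shape of the bound: each of the two factors is a \emph{single} sum over $i$, which arises from a sumset of spectral supports, not from multiplying $k$ separate contributions.

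The paper's proof is a one-shot argument built on exactly the gap you are missing. The key lemma (Lemma 3.1 and its corollary) says that the spectral support of a pointwise product lies in the \emph{sum} of the spectral supports of the factors. Hence $z=\prod_{i\ge 1}\mb{a}^i\cdot P_{s,l}(f_i)$ satisfies $P_\Sigma(z)=z$ with $\Sigma=\sum_i\mb{a}^i(s')$; since $P_\Sigma$ is (a limit of) self-adjoint projections, it can be transferred onto $f_0$, forcing $f_0$ into spectral support $\textrm{Ann}(s)\cap\Sigma$. Translating the whole integral by $(\mb{a}^k)^{-1}$ and repeating forces $f_k$ into $\textrm{Ann}(s)\cap\sum_{j<k}(\mb{a}^k)^{-1}\mb{a}^j(s')$. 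These two intersections are thin cones (small $\pi_\varrho$, resp.\ small $\pi_\lambda$, component), and Lemma 3.3 --- extracted from Wang's Proposition 7.1 --- bounds the $L^2$-norm of the projection of a $K$-finite vector onto such a cone by $Cs^{\mfq}d^{1/2}\bigl|\sum_i\varrho(\mb{a}^i)^{-1}\bigr|^{-\mfq/2}$, resp.\ $Cs^{\mfq}d^{1/2}\bigl|\sum_i\lambda(\mb{a}^k/\mb{a}^i)\bigr|^{-\mfq/2}$. One application of H\"older ($L^\infty$ on the middle factors) and Cauchy--Schwarz on $P_{U_0}(f_0)$ and $P_{U_k}(f_k)$ then yields both decay factors simultaneously. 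So the decay is not a matrix-coefficient/Mautner estimate iterated $k$ times; it is a norm bound for projections onto thin cones, applied exactly twice. To repair your write-up you would need to replace the induction by this spectral-transfer step and isolate the sumset lemma explicitly.
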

In fact, this is weaker than what we will prove in Section 4, but this version is given here for simplicity.

Notice that the $\mb{a}^i$ are in a positive Weyl chamber (in $D$) and a 
given weight is evaluated on them in both terms on the right hand side, so 
the summands do not cancel each other.  The space $\mathcal{D}$ and the parameter $s$ will become explicit in the 
next sections; in later sections we will examine actions for which $s$ can 
be eliminated and $D$ is replaced by all bounded, zero mean $K$-finite 
vectors satisfying appropriate smoothness assumptions.  With this uniformity, 
we will then be able to pass to arbitrary Sobolev vectors (of sufficiently 
high order) by using an observation from \cite{KaSp} based on a computation 
in \cite[Lemma 4.4.2.3]{War}.

\subsection{Approximate Projections}\label{defs_approx}
Given a finite dimensional normed vector space $(V, ||\cdot ||)$ over $\ik$ 
with $K$-invariant norm, we denote by $\widehat{V}$ the unitary dual, i.e. 
the topological group of all additive unitary characters of $V$.  For 
$\mb{x}=(x_i),\, \mb{y}=(y_i) \in V$ let $$(\mb{x},\mb{y})=\sum{x_i y_i}$$ be 
the standard bilinear form on $V$.  Choosing a fixed non trivial unitary 
character $\zeta$ of $\ik$, define the map $V \arw \widehat{V}$ by $$v \arw 
\zeta((v,\cdot)) =: \zeta_v.$$  This correspondence is a topological group 
isomorphism between $V$ and $\widehat{V}$ through which we will usually 
identify the two.  In this situation, given $v,w\in V$, we denote $[v,w] = 
\zeta_v(w)$.

Under $(\cdot,\cdot)$ we naturally define the transpose of a linear operator; 
define $\rho^*:G\arw \textrm{GL}(V)$ to be the inverse transpose of $\rho$, 
$$\rho^*(v) := (\rho^{-1})^T(v).$$  This provides an identification of the 
dual action of $G$ on $V^*$ with the action $\rho^*$ on $V$, given the 
topological isomorphism above.  Furthermore, if $\rho$ is irreducible and 
excellent on $V$, so is $\rho^*$; finally, $\|\cdot\|$ is $\rho^*(K)$
-invariant as well.  See Section 6.1 of \cite{Wang} for these facts.

For $f\in L^1(V)$ and $\chi\in \widehat{V}$, define the Fourier transform
\begin{equation}
\widehat{f}(\chi) = \int_{V} \overline{\chi(v)} f(v)\, dm(v)
\end{equation}

Where $dm(v)$ is a Haar measure on $V$.  Using the topological identification 
of $V$ and $\widehat{V}$, we can view the Fourier transform as a function on 
$V$ by the formula

\begin{equation}\label{onv}
\widehat{f}(w) = \int_{V} \zeta_{-w}(v) f(v)\, dm(v) = \int_{V} [-w,v] f(v)\, dm(v)
\end{equation}
in the bracket notation of the pairing.

We will use repeatedly the following theorems(Plancherel, inversion and 
duality):
\begin{planch}
There is a normalization of the dual Haar measure $dm(\chi)$ on $\widehat{V}$ 
so that:
\begin{enumerate}
\item The Fourier transform extends to an isometry $L^2(V)\arw L^2(\widehat{V})$.
\item If both $f$ and $\widehat{f}$ are integrable, then for almost every $v\in V$
\begin{equation}\label{planc}
f(v) = \int_{\widehat{V}} \chi(v)\widehat{f}(\chi)\, dm(\chi).
\end{equation}
\item Every $v\in V$ defintes a unitary character of $\widehat{V}$ through 
the pairing $(v,\chi)\arw \chi(v)$ which furnishes a canonical topological 
isomorphism between $V$ and $\widehat{\widehat{V}}$.
\end{enumerate}
\end{planch}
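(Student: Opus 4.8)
The plan is to recognize the statement as the classical Plancherel theorem, the Fourier inversion formula and Pontryagin duality, specialized to the locally compact abelian group $(V,+)$, and to reduce it to the one–dimensional situation. First I would fix a $\ik$–basis of $V$, so that $V$ becomes $\ik^n$ as a topological group with $(\cdot,\cdot)$ the standard bilinear form; under the identification $v\mapsto\zeta_v$ set up above, this realizes $\widehat{V}$ as the $n$–fold product $(\widehat{\ik})^n$, and the Fourier transform in \eqref{onv} factors as the $n$–fold tensor product of the one–variable transform. Hence it suffices to normalize the dual Haar measure on $\widehat{\ik}$ and to prove (1)--(3) for $V=\ik$; one then takes the product measure on $\widehat{V}$ and all three assertions multiply out over the coordinates.

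Next I would dispose of the one–dimensional cases. For $\ik=\IR$ the three claims are the textbook Fourier inversion and Plancherel theorems, with $dm$ the self–dual Haar measure attached to $\zeta$ (normalized so that no extraneous constant appears in \eqref{planc}); for $\ik=\IC$ one writes $\IC$ as $\IR^2$ and appeals to the previous case. For non–Archimedean $\ik$ I would argue directly: the additive characters of $\ik$ are precisely the $\zeta_v$, and a short computation shows that the Fourier transform of the indicator of a fractional ideal $q^m\co$ (with $\co$ the ring of integers of $\ik$) is a positive multiple of the indicator of a fractional ideal depending only on $m$ and on the conductor of $\zeta$. Fixing $dm$ so that these indicators transform correctly makes Plancherel and inversion hold on that family by inspection, and since finite linear combinations of indicators of cosets of ideals are dense in both $L^1(\ik)$ and $L^2(\ik)$, parts (1) and (2) follow by density together with a routine dominated–convergence argument, carried out first on the dense subspace of compactly supported locally constant functions (where both $f$ and $\widehat{f}$ are of that form) and then extended.

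For part (3) I would observe that the natural map $\iota\colon V\arw\widehat{\widehat{V}}$ given by $\iota(v)(\chi)=\chi(v)$ is a continuous group homomorphism, and that it is injective because the characters $\zeta_v$ separate points of $V$ (equivalently, $(\cdot,\cdot)$ is nondegenerate and $\zeta\neq 1$). Applying the construction $v\mapsto\zeta_v$ to the vector space $\widehat{V}$ — which has already been identified with $V$ — produces an isomorphism $\widehat{V}\cong\widehat{\widehat{V}}$; composing with the identification $V\cong\widehat{V}$ identifies $\widehat{\widehat{V}}$ with $V$ in such a way that $\iota$ becomes the identity, whence $\iota$ is a topological isomorphism. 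Alternatively, surjectivity of $\iota$ can be read off directly from the inversion formula proved in part (2).

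The one genuinely delicate point — and the step I expect to require the most care — is the bookkeeping of normalizations: I must verify that the \emph{single} normalization of $dm$ which makes the Fourier transform an $L^2$–isometry is simultaneously the one for which the constant in \eqref{planc} is exactly $1$, i.e. the self–dual measure attached to $\zeta$, and that passing to the $n$–fold product measure is consistent with this. This is where the explicit computations in the Archimedean and non–Archimedean cases actually do work; everything else is soft. The relevant references are Weil's \emph{Basic Number Theory}, Chapter II, and the abstract Plancherel theorem for second countable locally compact abelian groups, together with the structure of local fields and the facts about $\widehat{V}$ recalled in Section~6 of \cite{Wang}.
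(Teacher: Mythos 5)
The paper does not actually prove this statement: it is quoted as a classical package (Plancherel, Fourier inversion, Pontryagin duality for the additive group of a finite-dimensional vector space over a local field) and the reader is referred to Taibleson \cite{Tai} for the details, so there is no internal proof to compare yours against. Your outline is the standard textbook argument and is essentially correct: reduction to $V=\ik^n$ via a basis so that the transform tensors over coordinates, the explicit one-dimensional computations ($\IR$ by classical Fourier analysis, $\IC$ as $\IR^2$, non-Archimedean $\ik$ by computing the transform of indicators of fractional ideals and extending by density from the locally constant compactly supported functions), and the identification of the self-dual measure attached to $\zeta$ as the normalization that makes (1) and (2) hold simultaneously. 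You correctly flag the normalization bookkeeping as the only genuinely delicate point. Two minor remarks: your argument for part (3) as written reads slightly circular --- you identify $\widehat{\widehat{V}}$ with $V$ by applying the construction $v\mapsto\zeta_v$ twice and then assert $\iota$ becomes the identity; to make this airtight you should note that this uses the symmetry of the bilinear form $(\mb{x},\mb{y})=\sum x_iy_i$, and in any case the cleaner route you mention (surjectivity of $\iota$ from the inversion formula of part (2), injectivity from nondegeneracy of the pairing, openness from local compactness and $\sigma$-compactness) is preferable. Also, for $\ik=\IC$ the passage to $\IR^2$ changes the self-dual measure by a factor (the standard normalization is twice Lebesgue measure), which is exactly the kind of constant your final paragraph promises to track; none of this affects correctness.
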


The Schwartz-Bruhat space $\fs(V)$ is just the usual Schwartz space when $\ik$ is Archimedean; in 
the non-Archimedean case, it consists of compactly supported, locally 
constant functions on $V$.  The main properties of $\fs(V)$ are that its 
functions are dense in $L^2(V)$ and the Fourier transform furnishes a 
topological isomorphism $\fs(V) \simeq \fs(\widehat{V})$.  For more details 
about the Fourier analysis facts we will use \cite{Tai}.

Given a Schwartz function $\phi$ on $\widehat{V}$ and $f\in \alg_K$ define 

\begin{equation}\label{prodef11}
P_\phi (f) := \int_{V} \widehat{\phi}(x)\, (x\cdot f) \,dm(x)
\end{equation}
Here we use the formulation of \cite{Lang}, Chapter 11 for Banach-space valued integrals.  Because of the rapid decay of 
the Fourier transform $\widehat{\phi}$, $P_\phi(f)$ retains differentiability 
properties of $f$ and the inequality
\begin{equation}\label{lp0}
||P_\phi (f)||_p \leq ||\widehat{\phi}||_{L^1(V)}||f||_p\,,\quad 1\leq p \leq \infty
\end{equation}
shows that it is bounded on all the spaces we will consider.  Some structural 
properties of this operator (the case $\ik=\IR$ is worked out 
in \cite{HT}, chapter I; the general case has no new features regarding these 
properties) include

\begin{itemize}
\item $P$ is self-adjoint (with respect to the inner product of $\Hil$) for 
real $\phi$; more generally, $P^*_\phi = P_{\overline{\phi}}$.
\item Operator multiplication transforms to pointwise multiplication of 
functions: \begin{equation} \label{xpro0} \D{P_{\phi\psi}=P_\phi \circ P_\psi}
\end{equation}  This property plus linearity in the subscript shows that $P$ 
is a homomorphism from the pointwise algebra of Schwartz functions to self 
adjoint operators on $\Hil$.
\item Given the context of Section 2, for $g\in G$, \begin{equation}\label{conj} \sigma(g)P_\phi 
\sigma(g^{-1}) = P_{\phi(\rho(g^{-1})\cdot)}.\end{equation}  When $\phi$ is $K$
-invariant, this equation implies that $P_\phi$ commutes with the $K$-action 
and thus $K$-finite vectors are $L^2$-dense in the range of $P_\phi$.
\end{itemize}

Note that we will usually identify $\phi\in \fs(\widehat{V})$ with 
$\phi(\grz_\cdot)\in \fs(V)$.  With that identification, the action of $G$ in 
\eqref{conj} corresponds to the representation $\rho^*$ on $\fs(V)$, i.e. 
when we think of $\phi$ as afunction on $V$, we have 
\begin{equation}\label{conj2} \sigma(g)P_\phi \sigma(g^{-1}) = P_{
\phi(\rho^*(g^{-1})\cdot)}.\end{equation}

Convergence and limits involving $P$ are obtained using positivity: for 
$\phi\geq 0$, $P_\phi$ is a positive semidefinite operator.  To see this, 
simply use \eqref{xpro0} and self-adjointness:
\begin{equation*}
\langle P_\phi(f),f\rangle =\langle P_{\sqrt{\phi}}(P_{\sqrt{\phi}}(f)),f\rangle = \langle P_{\sqrt{\phi}}(f),P_{\sqrt{\phi}}(f)\rangle \geq 0
\end{equation*}
This way we see that $P_\phi \geq P_\psi$ and thus $||P_\phi||_2 \geq 
||P_\psi||_2$ when $\phi \geq \psi$.  Thus, if $\phi_j$ increase or decrease 
monotonically to a bounded function on $V$, the $P_{\phi_j}$ converge 
strongly to a bounded, self-adjoint operator on $\Hil$.  Although we will 
mostly deal directly with the $P_\phi$, since we cannot guarantee control on 
the $L^\infty$ norm for the limits in general, we will use them as a tool to 
simplify calculations.  Of course, under additional assumptions about the 
smoothness of the vectors, integration by parts in \eqref{prodef11} 
transforms the sequence into one which is $L^p$-convergent for any $p\geq 1$, 
but since we want to treat $K$-finite vectors that will not be necessarily 
smooth, we avoid the use of the limit operators as such.

Now let $S$ be a subset of $V$ with the property that its characteristic 
function $\chi_S$ can be pointwise approximated by a sequence of decreasing 
compactly supported Schwartz functions; we call such sets admissible and all 
sets we will deal with will be admissible.   In particular, we will be 
interested in the annuli $$\textrm{Ann}(s) := \{x\in V | s^{-1}< ||x||<s\}.$$
  Recall that the norm on $V$ is assumed $K$-invariant.  The characteristic function of each annulus 
$\chi_{\textrm{Ann}(s)}$ can be approximated by a sequence of smooth 
functions with the properties 

\begin{eqnarray*}
\phi_s^k &\equiv & 1\textrm{ on }\textrm{Ann}(s)\\
\textrm{supp}(\phi_s^k) &\subset &\textrm{Ann}\left(s+\frac{1}{k}\right)\\
\phi_s^k &\leq & \phi_s^l\textrm{  for }l\leq k
\end{eqnarray*}
From this definition, the sequence $P_{s,k} := P_{\phi_s^k}$ consists of 
positive, decreasing, self-adjoint (see \cite{HT} for the easy computation) 
bounded operators on $L^2(X)$ and thus has a strong limit for fixed $s$ as $k$
 tends to infinity which by \eqref{xpro0} is idempotent, since $\phi_s^k \arw 
\chi_{\textrm{Ann}(s)}$.  Note that the image under $P_s = \lim P_{s,k}$ of 
$L_0^\infty(X)$ is $L_0^2$-dense in $L_0^\infty$ since the $P_s$ form a 
system of projections that tends to the identity operator in $L_0^2(X)$ as $s$
 goes to infinity.

The properties of $P_\phi$ listed above imply trivially some important facts:
\begin{itemize}
\item If $\textrm{supp}(\phi) \subset S$, then 
\begin{equation}\label{stab} P_S(P_\phi)=P_\phi \end{equation}
\item If $S$ is invariant under rotations, then for any $g\in K$,
\begin{equation}\label{rotinv}  \sigma(g)P_S \sigma(g^{-1}) = P_S 
\end{equation}
\item By the previous property, when $S$ or $\phi$ are $K$-invariant, $P_S$ 
or $P_\phi$ commutes with the action of $K$ and thus $K$-finite vectors are 
dense in the range of $P_S$ or $P_\phi$.
\end{itemize}
The following ad hoc notation will be convenient: if $P_S(f)=f$ for some set 
$S$, we say that the 'spectral support' of $f$ lies in $S$; when $S$ is 
replaced in the subscript by a Schwartz function, the notion will refer to 
its support.  Intuitively, $P_S$ restricts the 'spectrum' of $f$ to lie in $S$
, so a function which is unaffected by this application is justified in being 
called 'spetrally supported in $S$'.  Note that $P_S(L^2)$ is a closed vector 
subspace of $L^2$ since the $P_S$ are norm bounded (for fixed $S$).

With this notation in hand, we can define explicitly the dense subspace of 
$\alg_K$ where we will bound the coefficients effectively.
\begin{dens}
We define $\mathcal{D}$ to be the union
$$ \mathcal{D} = \bigcup_{s>0,k>s^2} P_{\phi_s^k}(\alg_K)$$ and call it the 
space of spectrally bounded functions in $\alg_K$.
\end{dens}
It is easy to see that this space is $L^2$-dense in $\alg_K$.  Of course the 
specific choice $k>s^2$ is not important, we just need some leeway for 
approximations and we do not want $k$ to be too small as to cause problems 
with stretching annuli.

\section{Main Results}
\subsection{Behavior and bounds on projection operators}\label{lemmata}
First, we need a lemma on how pointwise multiplication of functions behaves 
with respect to the operators $P_\phi$.  Below we identify $\widehat{V}$ with 
$V$ and the two dual Haar measures by the isomorphism in 2.3 (compatibility 
in the computations below is guaranteed by \eqref{planc}).  Recall the 
notation $[u,z] = \zeta_u(z)$ for $u,z\in V$ (keep in mind the usual case 
$[u,z] = e^{i\langle u,z\rangle}$).  The result is

\begin{step1}\label{st1}
Let $\phi,\psi \in \fs(V)$ and $f,g\in L^2(X)$ be such that the pointwise 
(a.e.) product $P_\phi(f)P_\psi(g)$ is in $L^2(X)$.  Suppose $\omega \in 
\fs(V)$ is identically equal to one on 
$\textrm{supp}(\phi)+\textrm{supp}(\psi)$; then 
$P_\omega(P_\phi(f)P_\psi(g))=P_\phi(f)P_\psi(g)$.
\end{step1}
\begin{proof}
Compute:
\begin{eqnarray*}
& & P_{\omega} ( P_{\phi}(f)P_{\psi}(g))\\  &=& \int \widehat{\omega}(z)\int 
\widehat{\phi}(x)\rho(z+x)f \,dm(x)\int\widehat{ \psi}(y)\rho(z+y)g \mbox{ }\,
dm(y)\mbox{ }\,dm(z)\\
&=& \int \widehat{\omega}(z)\int \widehat{\phi}(x-z)\rho(x)f \,dm(x)
\int\widehat{ \psi}(y-z)\rho(y)g \mbox{ }\,dm(y)\mbox{ }\,dm(z)\\
&=& \iint \rho(x)f \mbox{ }\rho(y)g \int \widehat{\omega}(z) \widehat{\phi
}(x-z)\widehat{ \psi}(y-z) \,dm(z)\mbox{ }\,dm(x)\mbox{ }\,dm(y)
\end{eqnarray*}
Now expand the inner integral using the definition of the Fourier transform, 
valid for $L^1$ functions:
\begin{eqnarray*}
& & \int \widehat{\omega}(z) \widehat{\phi}(x-z)\widehat{\psi}(y-z) \,dm(z)\\
&=& \iiiint \omega(u_3)[-z,u_3]\phi(u_1)[-(x-z),u_1] \\ & &\quad\cdot\psi(u_2)
[-(y-z),u_2]\,dm(u_2) \,dm(u_1) \,dm(u_3) \,dm(z)\\
&=& \iint \phi(u_1)[-x,u_1]\psi(u_2)[-y,u_2] \\ & &\quad \cdot\left(\iint 
\omega(u_3)[-z,u_3]\,[z,u_1]\,[z,u_2]\,dm(u_3) \,dm(z)\right) \,dm(u_1) \,
dm(u_2)\\
&=& \iint \phi(u_1)[-x,u_1]\psi(u_2)[-y,u_2] \\ & &\quad \cdot\left(\int 
[z,u_1+u_2]\int \omega(u_3)[-z,u_3]\,dm(u_3) \,dm(z)\right) \,dm(u_1) \,
dm(u_2)\\ 
\end{eqnarray*}
The integral in the parentheses is simply
\begin{equation*}
\int [z,u_1+u_2]\widehat{\omega}(z)\,dm(z) = \omega(u_1+u_2) = 1
\end{equation*}
by Fourier inversion and the fact that $u_1\in \textrm{supp}(\phi),\,u_2\in 
\textrm{supp}(\psi)$.  Untangling the remaining integrals we get the required 
result.
\end{proof}
\begin{corr}
Let $\textrm{supp}(\phi)\subset S$ and $\textrm{supp}(\psi)\subset T$ for 
admissible sets $S$ and $T$.  Then 
\begin{equation}\label{mult} P_{S+T}(P_\phi P_\psi) = P_\phi P_\psi \end{equation}
\end{corr}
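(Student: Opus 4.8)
The plan is to derive the corollary directly from Lemma~\ref{st1} by a monotone approximation argument, using the positivity and monotonicity properties of the $P_\phi$ established in Section~\ref{defs_approx}. Since $S$ and $T$ are admissible, write $\chi_S = \lim_{k} \phi^{(k)}$ and $\chi_T = \lim_{k}\psi^{(k)}$ as pointwise decreasing limits of compactly supported Schwartz functions, with $P_S = \lim_k P_{\phi^{(k)}}$ and $P_T = \lim_k P_{\psi^{(k)}}$ strongly; similarly the set $S+T$ is admissible (the sum of two admissible sets, e.g.\ annuli, is again such), so $\chi_{S+T}$ is a decreasing pointwise limit of Schwartz functions $\omega^{(k)}$, each identically $1$ on a neighbourhood of $S+T$, with $P_{S+T} = \lim_k P_{\omega^{(k)}}$ strongly.

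First I would reduce to the case where the two given functions are themselves outputs of approximate projections: apply the corollary's hypothesis $\operatorname{supp}(\phi)\subset S$ and \eqref{stab} to get $P_\phi = P_S P_\phi$ and likewise $P_\psi = P_T P_\psi$, so that on any vectors $f,g$ the product $P_\phi(f)P_\psi(g)$ equals $P_\phi(P_S f) P_\psi(P_T g)$ — but more to the point, since $\operatorname{supp}(\phi)$ is a fixed compact set contained in $S$, for $k$ large enough $\omega^{(k)}$ is identically $1$ on $\operatorname{supp}(\phi)+\operatorname{supp}(\psi)$ (because $S+T$ contains a neighbourhood of that compact set once we know $\omega^{(k)} \equiv 1$ on a neighbourhood of $S+T$). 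Then Lemma~\ref{st1} applies verbatim to the triple $(\phi,\psi,\omega^{(k)})$ and gives $P_{\omega^{(k)}}\bigl(P_\phi(f)P_\psi(g)\bigr) = P_\phi(f)P_\psi(g)$ for every such $k$.

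Next I would pass to the limit $k\to\infty$: the left-hand side converges in $L^2$ to $P_{S+T}\bigl(P_\phi(f)P_\psi(g)\bigr)$ by strong convergence $P_{\omega^{(k)}}\to P_{S+T}$ (all operators here are uniformly norm-bounded by \eqref{lp0}, with $\|\widehat{\omega^{(k)}}\|_{L^1}$ controlled along the decreasing sequence, so the convergence is genuine), while the right-hand side does not depend on $k$. This yields $P_{S+T}\bigl(P_\phi(f)P_\psi(g)\bigr) = P_\phi(f)P_\psi(g)$ for all $f,g\in L^2(X)$ for which the pointwise product lies in $L^2(X)$, which is the operator identity \eqref{mult}. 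In terms of the ``spectral support'' terminology introduced after \eqref{rotinv}: $P_\phi(f)$ is spectrally supported in $S$ and $P_\psi(g)$ in $T$, so their product is spectrally supported in $S+T$ — a convolution-type statement whose proof is exactly the Fourier computation of Lemma~\ref{st1} made robust by approximation.

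The only genuinely delicate point is the uniform boundedness needed to justify taking the limit inside, i.e.\ controlling $\|\widehat{\omega^{(k)}}\|_{L^1(V)}$ (or, what amounts to the same, ensuring the $P_{\omega^{(k)}}$ converge strongly and not merely weakly-$*$); this is handled by choosing the approximating sequence $\omega^{(k)}$ to be monotone decreasing, exactly as the $\phi_s^k$ were chosen in Section~\ref{defs_approx}, so that positivity gives $P_{\omega^{(k)}}\geq P_{\omega^{(k+1)}}\geq 0$ and monotone convergence of positive operators applies. Everything else is a direct invocation of Lemma~\ref{st1} together with \eqref{stab}, so there is no essential obstacle beyond bookkeeping with the approximating sequences.
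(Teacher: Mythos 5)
Your proof is correct and follows exactly the route the paper intends (the paper states this corollary without proof, remarking only that admissible sets are approximated from above by Schwartz functions and that $P_{\cdot}$ is monotone): approximate $\chi_{S+T}$ by a decreasing sequence of Schwartz functions identically $1$ on $S+T$, apply Lemma~\ref{st1} to each, and pass to the strong limit using positivity and monotonicity of the operators. The only point worth noting is that your argument uses the (harmless, and satisfied by all the paper's examples such as the annuli $\phi_s^k$) strengthening that the approximants of an admissible set can be taken identically $1$ on the set itself, which is what lets Lemma~\ref{st1} apply verbatim; your worry about controlling $\|\widehat{\omega^{(k)}}\|_{L^1}$ is unnecessary once strong convergence is obtained from monotone convergence of positive self-adjoint operators, as you yourself observe.
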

The relations \eqref{mult} and \eqref{stab} form the core of the main 
computation.

In the sequel, we will examine how restricting a unit (in the $L^2$-norm) $K$
-finite vector $f$ to the image of an approximate projection $P_\phi$ for 
suitable $\phi$ affects its norm.  This was accomplished in greater 
generality in \cite{Wang} from which we will draw notation and results, 
noting the places in that paper where they are treated.  The idea of 
estimating matrix coefficients (non-uniformly) by looking at the effect the 
representation has on their 'spectral support' (i.e. the smallest set with an 
approximate characteristic function $\phi$ such that $P_\phi(f)=f$) and then 
estimating norms of functions with restricted spectral support is a major 
theme in chapter 5 of \cite{HT}; the non-commutativity of $K$ in our setting 
increases the complexity of this method considerably.  However, the detailed 
analysis in \cite{Wang} allows one to carry it out effectively.

In order to state the second main lemma and principal ingredient for bounding 
norms of projected vectors, we need some additional concepts from \cite
{Wang}.  Recall the list of notations from Section 2 and assume that $\rho = \rho_1$ 
is irreducible with highest / lowest weights $\lambda, \varrho$.  For 
$\psi\in\Phi_1$, Let $\pi_\psi(v)$ be the projection of $v$ on the weight 
space $V_\psi$. 

Define the 'cones' 
\begin{eqnarray*}
\textrm{Cone}_1(c,s) = \{v\in V: ||\pi_\lambda(v)||\leq c\textrm{ and }||v||\geq s\},\\
\textrm{Cone}_2(c,s) = \{v\in V: ||\pi_\varrho(v)||\leq c\textrm{ and }||v||\geq s\}.\\
\end{eqnarray*}
See \cite[Proposition 6.1]{Wang} for the fundamental properties of these 
sets.  We will not use Proposition 6.1 itself here, but we will follow 
verbatim the computations in Proposition 7.1 which uses Proposition 6.1 in 
a crucial way.

Observe that the norm $\|\cdot\|_\infty$ on $V$ defined by 
$$\|v\|_\infty=\max_{\phi\in \Phi_1}\|\pi_\phi(v)\|$$ is equivalent to 
the given norm since $\textrm{dim}(V)<\infty$, so in particular $\|v\|_\infty 
\leq C\|v\|$; we will use this observation below.

\begin{compact}\label{compac}
Let $f$ be $K$-finite with $||f||_2=1$, $\textrm{dim}\langle K \cdot f\rangle 
= d_f$, $\mb{a}^i\in D^+$ for $i=1,\cdots,k$ ordered in \emph{increasing} 
$\pi_\lambda{\mb{a}^i}$ with sufficiently high norms\footnote{In the sense of 
\eqref{norms}.} depending \emph{only on the action}, $\textrm{Ann}(s)$ the 
annulus defined in Section 2.4 and 
$F_s\in \fs(V)$ with compact support inside the set $$X_1(\mb{a},s) = 
\textrm{Ann}(s)\cap 
\left(\sum_i\rho^*(\mb{a}^i)\left(\textrm{Ann}(s^{-1},s)\right)
\right).$$  Then for some positive 
$C$ independent of $\mb{a}$, $s$ and $f$ we have the bounds
\begin{equation}
||P_{F_s} (f) ||_2 \leq Cs^{\mfq} d^{\frac{1}{2}}_f |\sum_i \varrho ( \mb{a}^i )^{-1} |^{-\frac{\mfq}{2}}.
\end{equation}
Similarly, if the support of $F_s$ is in the set $$X_2(\mb{a},s) = 
\textrm{Ann}(s)\cap 
\left(\sum_i\rho^*(\frac{\mb{a}^i}{\mb{a}^k})\left(\textrm{Ann}(s^{-1},s)\right
) \right),$$ then as above
\begin{equation}
||P_{F_s} (f) ||_2 \leq Cs^{\mfq} d^{\frac{1}{2}}_f |\sum_i \lambda (\frac{
\mb{a}^k}{\mb{a}^i})|^{-\frac{\mfq}{2}}.
\end{equation}

\end{compact}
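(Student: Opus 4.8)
The plan is to carry out everything inside the cone machinery of \cite[\S 6--7]{Wang}: the only thing to do is to trap the supports $X_1(\mb{a},s)$ and $X_2(\mb{a},s)$ inside the cones $\textrm{Cone}_2$, respectively $\textrm{Cone}_1$, with the correct transverse scale, and then reproduce the $K$-rotation estimate that is the content of \cite[Proposition 7.1]{Wang} (which in turn rests on the geometry of the cones in \cite[Proposition 6.1]{Wang}). Replacing $\rho$ by its excellent dual $\rho^*$ — which interchanges the highest weight $\lambda$ with $-\varrho$ — and $\mb{a}^i$ by $\mb{a}^i/\mb{a}^k$, the $X_2$-bound reduces to the $X_1$-bound, so I only discuss $X_1$. \emph{Geometry of the support.} For $a\in D$ the operator $\rho^*(a)$ acts on the weight space $V_w$ of $\rho$ by the scalar $w(a)^{-1}$ (with respect to the form of \cite[\S 6.1]{Wang}), so $\rho^*(\mb{a}^i)$ dilates the extreme weight space $V_\varrho$ by the large factor $\varrho(\mb{a}^i)^{-1}$. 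Since all the $\mb{a}^i$ lie in the single Weyl chamber $D^+$, these dilations on $V_\varrho$ reinforce rather than cancel under the Minkowski sum $\sum_i\rho^*(\mb{a}^i)(\textrm{Ann}(s^{-1},s))$; combining this with the norm cap imposed by the factor $\textrm{Ann}(s)$ and the equivalence $\|\cdot\|_\infty\leq C\|\cdot\|$ recorded before the statement, one gets $X_1(\mb{a},s)\subset \textrm{Cone}_2(c,r)\cap\textrm{Ann}(s)=:C_0$ for parameters with $r/c\gtrsim s^{-2}\,\bigl|\sum_i\varrho(\mb{a}^i)^{-1}\bigr|$. (If this ratio is $\leq 1$ the asserted inequality is trivial, as $\|P_{F_s}\|_2\leq 1$ and $d_f\geq1$, so we assume it large.)

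To estimate $\|P_{F_s}(f)\|_2$ we first pass to a genuine projection. Assuming, as holds in all applications, that $0\leq F_s\leq1$, we have $|F_s|^2\leq\chi_{C_0}$, so by the positivity of $\phi\mapsto P_\phi$ and \eqref{xpro0}, $P_{F_s}^{*}P_{F_s}=P_{|F_s|^2}\leq P_{C_0}$; since $P_{C_0}$ is a self-adjoint idempotent, $\|P_{F_s}(f)\|_2^2=\langle P_{|F_s|^2}f,f\rangle\leq\langle P_{C_0}f,f\rangle=\|P_{C_0}(f)\|_2^2$, and it suffices to bound $\|P_{C_0}(f)\|_2$. \emph{The rotation estimate (after \cite[Proposition 7.1]{Wang}).} Choose $k_1,\dots,k_M\in K$ so that the rotated cones $\rho^*(k_1)C_0,\dots,\rho^*(k_M)C_0$ are pairwise disjoint. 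Disjoint supports make the corresponding $P$-operators multiply to zero by \eqref{xpro0}, so the self-adjoint idempotents $P_{\rho^*(k_j)C_0}$ have pairwise orthogonal ranges and satisfy $\sum_j\|P_{\rho^*(k_j)C_0}(g)\|_2^2\leq\|g\|_2^2$ for every $g\in\Hil$. Applying this to an orthonormal basis $e_1,\dots,e_{d_f}$ of $\langle K\cdot f\rangle$ and summing over it gives $\sum_j\sum_l\|P_{\rho^*(k_j)C_0}(e_l)\|_2^2\leq d_f$, so some $j_0$ satisfies $\sum_l\|P_{\rho^*(k_{j_0})C_0}(e_l)\|_2^2\leq d_f/M$. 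Writing $\sigma(k_{j_0})f=\sum_l c_l e_l$ with $\sum_l|c_l|^2=1$, using $\sigma(k)P_{C_0}\sigma(k)^{-1}=P_{\rho^*(k)C_0}$ (a case of \eqref{conj2}; see also \eqref{rotinv}), unitarity of $\sigma(k_{j_0})$ and Cauchy--Schwarz, we obtain
\[ \|P_{C_0}(f)\|_2=\Bigl\|\sum_l c_l\,P_{\rho^*(k_{j_0})C_0}(e_l)\Bigr\|_2\leq\Bigl(\sum_l|c_l|^2\Bigr)^{1/2}\Bigl(\sum_l\|P_{\rho^*(k_{j_0})C_0}(e_l)\|_2^2\Bigr)^{1/2}\leq\sqrt{d_f/M}. \]

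Finally, counting how many disjoint $\rho^*(K)$-rotates of $C_0=\textrm{Cone}_2(c,r)\cap\textrm{Ann}(s)$ fit inside the shell is precisely the combinatorial estimate carried out in \cite[Proposition 7.1]{Wang}; its output for the extreme weight space is $M\gtrsim (r/c)^{\mfq}$ up to absolute constants, the exponent $\mfq$ and the dichotomy in its definition (according to whether $\dim V_{\lambda}>1$) arising from the position of $V_\varrho$ inside $V$ under $K$. Feeding $r/c\gtrsim s^{-2}\bigl|\sum_i\varrho(\mb{a}^i)^{-1}\bigr|$ from the geometry step into $\|P_{F_s}(f)\|_2\leq\|P_{C_0}(f)\|_2\leq\sqrt{d_f/M}$ gives
\[ \|P_{F_s}(f)\|_2\leq C\,s^{\mfq}\,d_f^{1/2}\,\bigl|\sum_i\varrho(\mb{a}^i)^{-1}\bigr|^{-\mfq/2}, \]
and the $X_2$-estimate follows identically with $\varrho(\mb{a}^i)^{-1}$ replaced by $\lambda(\mb{a}^k/\mb{a}^i)$. \emph{Main obstacle.} The only point not already literally in \cite{Wang} is the geometry step: the Minkowski-sum sets $X_1(\mb{a},s)$, $X_2(\mb{a},s)$ do not occur there, and one must check each is trapped in a Wang cone with transverse parameter governed by the \emph{whole} sum $\bigl|\sum_i\varrho(\mb{a}^i)^{-1}\bigr|$ rather than a single term. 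This is exactly where it matters that all acting elements lie in one positive Weyl chamber, so that on the extreme weight space the dilations add up constructively; granting this, the rest is \cite[Propositions 6.1, 7.1]{Wang} applied verbatim.
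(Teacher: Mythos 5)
Your proposal is correct and follows essentially the same route as the paper: the one genuinely new ingredient is the containment of $X_1(\mb{a},s)$ (resp. $X_2$) in a Wang cone $\textrm{Cone}_2$ (resp. $\textrm{Cone}_1$) with transverse parameter $\asymp s\,|\sum_i\varrho(\mb{a}^i)^{-1}|^{-1}$, obtained by decomposing into weight spaces, using that the scalars $\psi(\mb{a}^i)^{-1}$ do not cancel for $\mb{a}^i\in D^+$, and invoking the equivalence $\|\cdot\|_\infty\leq C\|\cdot\|$ — exactly as in the paper — after which everything is \cite[Proposition 7.1]{Wang}. The only cosmetic difference is that you unpack the rotation/pigeonhole estimate over disjoint $K$-rotates of the cone explicitly, whereas the paper simply tracks the correspondence with Wang's pages 31--38 (first reducing to a $K$-invariant vector for the regular representation of $K\ltimes V$); both are the same argument.
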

\begin{proof}

The proof is essentially contained in the proof of Proposition 7.1 of \cite{Wang}.  We 
indicate how to extract the relevant parts for our lemma and explain the 
correspondences.  All references to numbered sections will belong to \cite
{Wang}.  We will only examine the fist situation, since the second one is identical.

In the course of proving that proposition, the author examines (pages 31-38) 
a Schwartz function\footnote{The reason we use $F_s$ for the function rather 
than the usual Greek letters is to facilitate the comparison with the computation 
in \cite{Wang}.} $F_s$ and the 
projection of a unit vector with respect to that function, bounding
the Hilbert space norm of $$\widehat{\Pi}(F_s^\frac{1}{2})\eta $$ in the notation of that 
paper, where $$F_s = (a\omega)^{-1}\* (h_s\cdot g_s)^\frac{1}{2};$$ in our 
notation $\eta$ is $f$ and $\widehat{\Pi}(F_s^\frac{1}{2})$ is 
$P_{F_s^\frac{1}{2}}$, $\omega=1$ because we are only considering the 
positive Weyl chamber and the precise definition of $F_s$ in \cite{Wang} is 
irrelevant; in order to carry out the computations, we only need $F_s$ to be 
Schwartz and its support contained in one of the two cones defined above, for 
specific $0<c\ll 1$ and $s>0$.  There, it is claimed that how small $c$ needs 
to be depends on $s$; however, the only dependence of $c$ on $s$ that is 
necessary there is that $cs^{-1}<C$ with $C$ depending only on the action.  
Since our $c$ here is going to be of the form $c=sA$ where $A$ does not 
involve $s$, we see that in order to ensure $cs^{-1}<C$ all we need is to 
bound $A$ by a constant depending only on the action; this translates to the 
norm bounds on the $\mb{a}^i$ in the statement.  See p.27 of \cite
{Wang} for the specific requirements on $c$.

By the discussion above, the reductions from page 31 to 
page 34 carry over to our $F_s$, resulting in the situation where we want to 
bound $$\|\tilde{P}_{F_s}\tilde{f}\|_2$$ where $\tilde{P}$ is the approximate 
projection operator for the regular representation of $K\ltimes V$ on $\Hil$ 
and $\tilde{f}$ is a $K$-invariant unit norm vector.  At that point we use 
the containment of the supports.  In our case, observe that our 
$X_1(\mb{a},s)$ is contained in the set $$E_1 = \{v\in 
\textrm{Ann}(s):s^{-1}\leq \|\sum_{i=1}^k \rho^*(\mb{a}^i)v \|\leq s \} $$
which becomes, after writing $v$ in terms of the weights, applying $\rho^*$ 
to each coordinate in $V_\psi$ and switching summations
$$E_1 = \{v\in \textrm{Ann}(s):s^{-1}\leq \|\sum_{\psi\in 
\Phi_1}(\sum_{i=1}^k \psi(\mb{a}^i)^{-1})\pi_\psi(v) \|\leq s \}. $$ Note the 
inverses because we are decomposing with respect to the $\Phi_1$ of $\rho$.  
Using the equivalence of norms $\|\cdot\|$ and $\|\cdot\|_\infty$ we see that this 
set is contained in $$S_1 =  \{v\in \textrm{Ann}(s): \|\pi_\varrho(v)\|\leq Cs |\sum_{i=1}^k 
\varrho(\mb{a}^i)^{-1}|^{-1} \}.$$

Since $\mb{a}^i\in D^+$, for large $\min_{1\leq i\leq k}|\mb{a}^i|$ (in any 
norm on $G$) the coefficient on the right hand side of the definition on $S_1$
is going to be small.  In particular, $S_1$ will be contained in the cone
$$\textrm{Cone}_1( Cs |\sum_{i=1}^k \varrho(\mb{a}^i)^{-1}|^{-1},s^{-1}).$$  
Having this containment, the argument from pages 34-37 goes through without 
change leading to the desired conclusion analogous to (7.22), (7.23) there.

\end{proof}
Since admissible sets can be approximated from above by Schwartz functions 
and the operators $P_{\cdot}$ are monotone, we get the corollary 
\begin{corrr}\label{corl}
With notation as in Lemma \ref{compac} and $S$ an admissible set contained in 
one of the $X_i(\mb{a},s)$, we have the corresponding bound from that lemma 
for $\|P_S(f)\|_2$.
\end{corrr}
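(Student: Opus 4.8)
The plan is to deduce the corollary from Lemma \ref{compac} by the same monotonicity-plus-approximation device already invoked (implicitly) in the passage from Lemma \ref{st1} to Corollary \ref{corr}. Fix a unit $K$-finite vector $f$ with $\dim\langle K\cdot f\rangle = d_f$, fix the acting elements $\mb{a}^i\in D^+$ satisfying the norm hypothesis \eqref{norms}, fix $s>0$, and let $S$ be an admissible subset of, say, $X_1(\mb{a},s)$; the argument for $X_2(\mb{a},s)$ is verbatim the same after replacing $\varrho$ by $\lambda$ and $\mb{a}^i$ by $\mb{a}^i/\mb{a}^k$. By the definition of admissibility from Section \ref{defs_approx}, there is a decreasing sequence of compactly supported Schwartz functions $F_s^{(j)}\in\fs(V)$ with $F_s^{(j)}\searrow \chi_S$ pointwise. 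Since $S\subset X_1(\mb{a},s)$, after multiplying each $F_s^{(j)}$ by a fixed Schwartz bump that equals $1$ on a neighborhood of $\overline{S}$ and is supported in $X_1(\mb{a},s)$ (here one uses that $X_1(\mb{a},s)$ is open, being an intersection of finitely many images under the homeomorphisms $\rho^*(\mb{a}^i)$ of the open annuli), we may assume each $F_s^{(j)}$ is itself supported inside $X_1(\mb{a},s)$, so Lemma \ref{compac} applies to each of them:
\begin{equation}\label{eq:corl-bound}
\|P_{F_s^{(j)}}(f)\|_2 \leq C s^{\mfq} d_f^{\frac{1}{2}}\Bigl|\sum_i \varrho(\mb{a}^i)^{-1}\Bigr|^{-\frac{\mfq}{2}}
\end{equation}
with $C$ independent of $\mb{a}$, $s$, $f$, and $j$.

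Now I would pass to the limit in $j$. By the positivity and monotonicity of the operators $P_\phi$ established in Section \ref{defs_approx} (namely $P_\phi\geq P_\psi$ when $\phi\geq\psi\geq 0$, hence $\|P_\phi\|_2\geq\|P_\psi\|_2$), the decreasing sequence $P_{F_s^{(j)}}$ converges strongly to a bounded self-adjoint operator on $\Hil$; since $F_s^{(j)}\searrow\chi_S$ and the $P$'s convert pointwise products of Schwartz functions to operator composition, that limit is precisely the idempotent $P_S$, exactly as in the construction of $P_s=\lim_k P_{s,k}$ in Section \ref{defs_approx}. Strong convergence gives $P_{F_s^{(j)}}(f)\to P_S(f)$ in $L^2(X)$, so $\|P_S(f)\|_2=\lim_j\|P_{F_s^{(j)}}(f)\|_2$, and \eqref{eq:corl-bound} survives the limit with the same constant $C$. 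Rescaling to a general $K$-finite $f$ (not necessarily of unit norm) by homogeneity, and noting that $d_f=\dim\langle K\cdot f\rangle$ is unchanged under scaling, yields the stated bound for $\|P_S(f)\|_2$; the analogous computation with $X_2(\mb{a},s)$ gives the second bound.

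The one place that needs a word of care — and the only real obstacle — is the reduction to Schwartz functions supported \emph{strictly inside} $X_i(\mb{a},s)$ rather than merely on its closure, since Lemma \ref{compac} as stated asks for compact support \emph{inside} $X_i(\mb{a},s)$. This is handled by the openness of $X_i(\mb{a},s)$ noted above together with the existence of Schwartz bump functions separating a compact set from the complement of an open set containing it (standard in the Archimedean case; in the non-Archimedean case one uses characteristic functions of small balls, which are already in $\fs(V)$). With that technicality dispatched, everything else is the monotone-limit routine that the paper has already set up, so no new ideas are required.
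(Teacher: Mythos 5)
Your proposal is correct and follows essentially the same route as the paper, which disposes of the corollary in one sentence by citing approximation of admissible sets from above by Schwartz functions together with the monotonicity (hence strong convergence) of the operators $P_{\phi}$. Your write-up merely fills in the details of that monotone-limit argument, including the minor support technicality, so no new ideas beyond the paper's are involved.
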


\subsection{Main theorem}\label{theorems}
Now consider $k\geq 2$ distinct elements $\mb{a}^i \in D^+$, $i=1\cdots k$ as above and $k+1$ functions $f_i\in 
\mathcal{D}, i=0\cdots k$ of $L^2$ norm $1$.  Order the $\mb{a}^i$ in increasing highest weight 
valuations and define $\mb{a}^0=\textrm{I}$.  We want to bound the correlation integral
\begin{equation}\label{correl}
 \int_X f_0(x)\,\mb{a}^1\cdot f_1( x)\,\cdots \,\mb{a}^k \cdot f_{k}(x)\,d\mu(x).
\end{equation}
Since $P_{s,k}(f)\arw f$ for any $f\in \alg$ and we only have finitely many $f_i$, 
we can assume that all $f_i$ are in the image of $P_{s,l}$ for some $s,l$ 
(and thus certainly in the image of $P_{s'}$ where $s' = s + \frac{2}{l}$).  

For notational convenience, abbreviate $\textrm{Ann}(s)$ by $(s)$ and denote 
its image under $\rho^*(\mb{a}^i)$ simply by 
$\mb{a}^i(s)$.  We will also denote the action of $\rho^*(\mb{a}^i)$ on the 
$\phi_s^k$ defined above simply by $\mb{a}^i(s,k)$.

\begin{central}\label{centralt}
Let $\mb{a}^i, f_i, s$ be as above.  Let 
$$d_i = \dim\langle K\cdot f_i\rangle.$$  There exists a positive constant 
$C'$ independent of the $f_i$ such that if 
\begin{equation}\label{a-ass}\min(\min_{i=0,\cdots,k-1}|\lambda(\frac{\mb{a}^k}{\mb{a}^i})|,\min_{i=1,\cdots,k}|\varrho(\mb{a}^i)^{-1}|)>C',
\end{equation} we have the bound
\begin{eqnarray}
& & \int_X f_0(x)\,\mb{a}^1\cdot f_1( x)\, \cdots \,\mb{a}^k \cdot f_{k}(x)\,d
\mu(x)\nonumber \\
& & \qquad \leq  C s^{2\mfq} d^{\frac{1}{2}}_0 d^{\frac{1}{2}}_k\left|\sum_
{i=1}^{k-1} \lambda(\frac{\mb{a}^k}{\mb{a}^i}) \right|^{-\frac{\mfq}{2}}
\left|\sum_{i=1}^k \varrho(\mb{a}^i)^{-1} \right|^{-\frac{\mfq}{2}}\label
{estimate}
\end{eqnarray}
where $C$ only depends on the $L^\infty$ norms of the $f_i$.
\end{central}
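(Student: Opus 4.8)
The plan is to follow the telescoping scheme from \cite{HT} and \cite{Wang}, peeling off one function at a time and using the multiplicativity relation \eqref{mult} together with the norm bounds of Lemma \ref{compac}. Write the correlation integral \eqref{correl} as an inner product $\langle f_0, \overline{\mb{a}^1\cdot f_1 \cdots \mb{a}^k\cdot f_k}\rangle$ in $\Hil$. Since all $f_i$ lie in the image of some $P_{s'}$, each translate $\mb{a}^i\cdot f_i$ has spectral support in $\rho^*(\mb{a}^i)\,\textrm{Ann}(s') = \mb{a}^i(s')$ by \eqref{conj2}. By Corollary \ref{corr}, the product $\prod_{i=1}^k \mb{a}^i\cdot f_i$ is spectrally supported in the sumset $\sum_{i=1}^k \mb{a}^i(s')$; more precisely, using an approximating Schwartz function $\omega$ equal to $1$ there, Lemma \ref{st1} lets us insert $P_\omega$ for free. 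The idea is then to split the estimate into the contribution of the ``highest weight direction'' (governed by $\lambda$ and the ratio $\mb{a}^k/\mb{a}^i$) and that of the ``lowest weight direction'' (governed by $\varrho$ and the $\mb{a}^i$ themselves), producing the two factors $\left|\sum_{i=1}^{k-1}\lambda(\mb{a}^k/\mb{a}^i)\right|^{-\mfq/2}$ and $\left|\sum_{i=1}^k \varrho(\mb{a}^i)^{-1}\right|^{-\mfq/2}$.

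\textbf{The two applications of the compactness lemma.} The first factor comes from pulling out $f_0$. Using Cauchy--Schwarz, $|\langle f_0, \overline{\prod \mb{a}^i\cdot f_i}\rangle| \leq \|P_{T}(f_0)\|_2 \cdot \|\prod \mb{a}^i\cdot f_i\|_2$ where $T$ is the spectral support of the product; since $f_0$ has norm $1$ and its translate by $\mb{a}^0=\textrm{I}$ is itself, and since the relevant set $T$ (after intersecting with $\textrm{Ann}(s')$, which is harmless) lies inside $X_2(\mb{a},s')$ after rescaling by $\mb{a}^k$, Lemma \ref{compac} gives a bound by $C (s')^{\mfq} d_0^{1/2}|\sum_{i=1}^{k} \lambda(\mb{a}^k/\mb{a}^i)|^{-\mfq/2}$ — and the $i=k$ term contributes nothing since $\lambda(\mb{a}^k/\mb{a}^k)=1$, so the sum effectively runs to $k-1$ as in \eqref{estimate}. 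For the second factor one symmetrically isolates $f_k$: rewrite the integral so that $\mb{a}^k\cdot f_k$ is paired against $\overline{f_0 \cdot \prod_{i=1}^{k-1} \mb{a}^i \cdot f_i}$, apply $\sigma((\mb{a}^k)^{-1})$ to both slots (unitary, preserves the integral), so $f_k$ itself (norm $1$, spectral support in $\textrm{Ann}(s')$) is paired against something whose spectral support, by \eqref{conj2} and Corollary \ref{corr}, lies in $\sum_{i=0}^{k-1}\rho^*(\mb{a}^i/\mb{a}^k)\,\textrm{Ann}(s')$, i.e.\ inside $X_1(\mb{a}/\mb{a}^k,\cdot)$ after the obvious bookkeeping, and Lemma \ref{compac} (first inequality, applied to the tuple $\mb{a}^i/\mb{a}^k$, whose lowest-weight valuation is $\varrho(\mb{a}^i/\mb{a}^k)^{-1}$... ) delivers the factor $|\sum_{i=1}^k \varrho(\mb{a}^i)^{-1}|^{-\mfq/2}$. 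The bound $d_k^{1/2}$ on the dimension of the $K$-span of $f_k$ enters here. The hypothesis \eqref{a-ass} is exactly what guarantees the ``sufficiently high norm'' condition in Lemma \ref{compac} with $C'$ depending only on the action.

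\textbf{Combining the estimates.} Having the two one-sided bounds, the remaining task is to combine them so the \emph{product} of the two exponential factors appears rather than just one. This is done by writing $P_\omega = P_{\sqrt{\omega}} \circ P_{\sqrt{\omega}}$ (relation \eqref{xpro0}) — or more carefully, by choosing $\omega$ supported in the intersection of the two relevant sumsets and self-adjointly splitting it — so that one square root of the spectral projection is absorbed into the $f_0$ side and the other into the $f_k$ side, each contributing a $(\cdot)^{-\mfq/2}$ before Cauchy--Schwarz. The remaining middle functions $\mb{a}^1\cdot f_1,\dots,\mb{a}^{k-1}\cdot f_{k-1}$ are estimated trivially in $L^\infty$: $\|\mb{a}^i\cdot f_i\|_\infty = \|f_i\|_\infty$, so $\|\prod_{i=1}^{k-1}\mb{a}^i\cdot f_i\|_\infty \leq \prod_{i=1}^{k-1}\|f_i\|_\infty$, which is where the dependence of $C$ on the $L^\infty$ norms enters and why no dimensions $d_i$ for $1\leq i\leq k-1$ appear. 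Tracking the powers of $s'$ through the two applications of Lemma \ref{compac} yields $(s')^{2\mfq}$, and replacing $s'$ by the original $s$ (they differ by a bounded multiplicative factor, absorbed into $C$) gives $s^{2\mfq}$.

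\textbf{Main obstacle.} The genuinely delicate point is the combination step: arranging the geometry so that the \emph{same} approximating function $\omega$ (or a pair of compatible ones) simultaneously witnesses that the product $\prod_{i=1}^k\mb{a}^i\cdot f_i$ is spectrally supported in a set lying inside $X_2$ (for the $f_0$-estimate) \emph{and}, after the $\sigma((\mb{a}^k)^{-1})$-conjugation, that the complementary product is spectrally supported in a set lying inside $X_1$ for the rescaled tuple (for the $f_k$-estimate). One must verify that intersecting the true sumset spectral support with $\textrm{Ann}(s')$ does not spoil either containment, and that the cone conditions in Lemma \ref{compac} are met for \emph{both} tuples $\mb{a}^i$ and $\mb{a}^i/\mb{a}^k$ under the single hypothesis \eqref{a-ass} — this is precisely why \eqref{a-ass} is stated as a minimum over both families of valuations. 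The rest is the bookkeeping of constants and powers of $s$, all of which is routine given Lemma \ref{compac} and Corollary \ref{corr}.
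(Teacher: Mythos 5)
Your outline reproduces the paper's strategy almost exactly: restrict to the images of $P_{s,l}$, use Lemma \ref{st1} to locate the spectral support of the product, transfer the resulting projection onto $f_0$, conjugate by $(\mb{a}^k)^{-1}$ and repeat to load a second projection onto $f_k$, bound the middle factors in $L^\infty$ and finish with Cauchy--Schwarz. The gap is in the step where Lemma \ref{compac} is actually invoked: you have the two cases attached to the wrong ends of the correlation, and the mechanism you describe for the $f_k$-side fails. The spectral support of $\prod_{i=1}^k\mb{a}^i\cdot f_i$ is $\sum_{i=1}^k\rho^*(\mb{a}^i)\,\textrm{Ann}(s')$, so the set carrying $f_0$ is $X_1(\mb{a},s')$, not ``$X_2$ after rescaling''; the \emph{first} inequality of Lemma \ref{compac} applies there and produces $d_0^{1/2}\left|\sum_i\varrho(\mb{a}^i)^{-1}\right|^{-\mfq/2}$. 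Dually, after conjugating by $(\mb{a}^k)^{-1}$ the complementary product is spectrally supported in $\sum_{j=0}^{k-1}\rho^*(\mb{a}^j/\mb{a}^k)\,\textrm{Ann}(s')$, which is precisely the set $X_2$, and the \emph{second} inequality is the one designed for this rescaled tuple, yielding $d_k^{1/2}\left|\sum_j\lambda(\mb{a}^k/\mb{a}^j)\right|^{-\mfq/2}$. Your proposal instead applies the first inequality to the tuple $\mb{a}^j/\mb{a}^k$. That cannot work: for $j=0$ one has $|\varrho(\mb{a}^0/\mb{a}^k)^{-1}|=|\varrho(\mb{a}^k)|<1/C'$, so the largeness hypothesis of the lemma is violated for the rescaled tuple; and even formally the output would be $\left|\sum_j\varrho(\mb{a}^j/\mb{a}^k)^{-1}\right|^{-\mfq/2}=|\varrho(\mb{a}^k)|^{-\mfq/2}\left|\sum_j\varrho(\mb{a}^j)^{-1}\right|^{-\mfq/2}$, which differs from the claimed factor by $|\varrho(\mb{a}^k)|^{-\mfq/2}\geq 1$ and can remain bounded below (no decay), since each summand $\varrho(\mb{a}^k)\varrho(\mb{a}^j)^{-1}$ may be of order one. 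The two cases of Lemma \ref{compac} exist precisely because the original tuple diverges in the positive chamber (detected by $\varrho^{-1}$) while the rescaled tuple diverges in the opposite direction (detected by $\lambda$ of the inverse ratios); you cannot route both factors through the $\varrho$-case.

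The ``main obstacle'' you single out --- choosing a single $\omega$ and splitting $P_\omega=P_{\sqrt{\omega}}\circ P_{\sqrt{\omega}}$ so that each half contributes one $(\cdot)^{-\mfq/2}$ --- is not how the product of the two decay factors arises, and you do not carry that device out. In the paper's argument the two projections are loaded consecutively: after transferring $P_\Sigma$ onto $f_0$ one has $P_{U_0}(f_0)$ in the integrand; conjugating by $(\mb{a}^k)^{-1}$ and transferring $P_{\Sigma_k}$ onto $f_k$ leads to \eqref{fini}, in which $P_{U_0}(f_0)$ and $\mb{a}^k\cdot P_{U_k}(f_k)$ appear \emph{simultaneously}. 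A single application of H\"older (taking the middle factors out in $L^\infty$) followed by Cauchy--Schwarz on the two remaining $L^2$ factors then multiplies the two bounds from Lemma \ref{compac}, each contributing $s^{\mfq}$, hence $s^{2\mfq}$. Once you correct which case of the lemma goes with which end of the correlation and replace the $P_{\sqrt{\omega}}$ device by this consecutive transfer, your argument coincides with the paper's.
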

\begin{proof}
The correlation can be written as 
\begin{equation}
\int_X P_{s,l}(f_0)\,\mb{a}^1 \cdot P_{s,l}(f_1)\,\cdots \mb{a}^k\cdot P_{s,l}(f_k)
\end{equation}
which by \eqref{conj} becomes
\begin{equation}
\int_X P_{s,l}(f_0)\,P_{\mb{a}^1 (s,l)}(\mb{a}^1\cdot f_1)\,\cdots \,P_{\mb{a}^k (s,l)}(\mb{a}^k\cdot f_k).
\end{equation}
We now use Lemma \ref{st1} repeatedly to conclude that $$P_{\mb{a}^1 
(s,l)}(\mb{a}^1\cdot f_1)\cdots P_{\mb{a}^k (s,l)}(\mb{a}^k\cdot f_k) \in 
P_\Sigma(\alg)$$ where $\Sigma$ is the iterated sum set $\sum_{i=1}^k 
\mb{a}^i(s')$; thus in particular if $$z := P_{\mb{a}^1 (s,l)}(\mb{a}^1\cdot 
f_1)\cdots P_{\mb{a}^k (s,l)}(\mb{a}^k \cdot f_k)$$ then $P_\Sigma(z)=z$.  
Thus the integral above becomes
\begin{equation}
\int_X P_{s,l}(f_0)P_\Sigma(z)
\end{equation}
Now $P_\Sigma$ is an orthogonal projection so we can transfer $P_\Sigma$ from 
$z$ to $P_s(f_0)$, getting $$P_\Sigma(P_{s,l}(f_0)) = P_{\chi_\Sigma 
\phi_s^l}(f_0).$$  Here we are abusing notation a little bit, since the last 
expression need not be a bounded function; we will take this shortcut to mean 
that we have an arbitrary Schwartz function $\phi$ dominating the function 
$\chi_\Sigma$ and we are applying $P_\phi$ to both terms of the 'inner 
product'; the rightmost term is unaffected, while the leftmost has spectral 
support approximately equal to that of $\chi_\Sigma \phi_s^l$ since $\phi$ is 
arbitrary and the support of $\chi_\Sigma \phi_s^l$ is easily seen to be an 
admissible set (also see Corollary \ref{corl}).
  Thus the integral becomes 
\begin{eqnarray*}
\int_X P_{{\chi_\Sigma \phi_s^l}}(f_0)z &=& \int_X P_{{\chi_\Sigma \phi_s^l
}}(f_0)P_{\mb{a}^1 (s,l)}(\mb{a}^1\cdot f_1)\cdots P_{\mb{a}^k (s,l)}(\mb{a}^k
\cdot f_k)\\
&=& \int_X P_{{\chi_\Sigma \phi_s^l}}(f_0)\mb{a}^1 \cdot P_{s,l}(f_1)\cdots 
\mb{a}^k \cdot P_{s,l}(f_k)
\end{eqnarray*}
Write $U_0 := \chi_\Sigma \phi_s^l$ and apply $(\mb{a}^k)^{-1}$ to all terms 
of the integral, giving
\begin{equation}
\int_X (\mb{a}^k)^{-1}\cdot P_{U_0}(f_0)P_{s,l}(f_k)\prod_{i=1}^{k-1}(\mb
{a}^k)^{-1}\cdot \mb{a}^i\cdot P_{s,l}(f_i)
\end{equation}
By unitarity, the value of the integral is not affected.  So, now we can 
repeat the reasoning above, summing the indices for all factors except for 
$P_{s,l}(f_k)$, and conclude that this integral is equal to
\begin{eqnarray}
& & \int_X (\mb{a}^k)^{-1}\cdot P_{U_0}(f_0)\, P_{s,l}(f_k)\,\prod_{i= 1}^{k-1} (\mb{a}^k)^{-1}\cdot\mb{a}^i\cdot P_{s,l}(f_i)\nonumber \\
&=& \int_X P_{s,l}(f_k) P_{\Sigma_k}(z_k)\\ & = &\int_X P_{\chi_{\Sigma_k}\phi_s^l}(f_k)z_k\nonumber \\
 &=& \int_X P_{U_0}(f_0) \mb{a}^k \cdot P_{U_k}(f_k)\prod_{i=1}^{k-1} \mb{a}^i\cdot P_{s,l}(f_i)\label{fini}
\end{eqnarray}
where $$\Sigma_k= \sum_{j=0}^{k-1} (\mb{a}^k)^{-1}\cdot \mb{a}^j(s'),$$ $$z_k 
= \prod_{i= 0}^{k-1} (\mb{a}^k)^{-1}\cdot\mb{a}^i\cdot P_{s,l}(f_i)$$  and 
$U_k = \chi_{\Sigma_k} \phi_s^l$.  

Denote by $U_0$ and $U_k$ respectively also the supports of the corresponding 
functions (which, note, are bounded above by $1$ and thus by the 
characteristic functions of the supports).  We can now immediately apply Lemma \ref
{compac} with $U_0$ and $U_k$ in the place of the two situations for $F_s$ 
considered there, bounding

$$\|P_{U_0}(f_0)\|_2\textrm{ and }\|\mb{a}^k \cdot P_{U_k}(f_k)\|_2=\|P_{U_k}(f_k)\|_2.$$

In order to finish the proof, we simply bound \eqref{fini} by the $L^\infty$ 
norms of the functions $f_i$ for $i\neq 0,k$ and then use Cauchy's inequality 
on the two remaining terms to finish the proof.
\end{proof}
\subsection{Examples}  In this section we see what the bounds obtained above 
mean for two particular cases of semidirect products.  The choice of these 
examples is not arbitrary: these groups will occur in Section \ref
{simple_groups} as subgroups (locally) of split simple groups of higher rank.

First consider the case of $\mathfrak{G}=\textrm{SL}(2,\ik)\ltimes \ik^2$ where the action 
is the standard matrix action on $2$-vectors.  The action is irreducible and 
there are only two weights; the roots of $\textrm{SL}(2,\ik)$ are $$\textrm{diag}(a,a^{-1}) 
\arw a^{\pm 2}$$ and the weights of the standard representation on $\ik^2$ are 
$$\textrm{diag}(a,a^{-1}) \arw a^{\pm 1}$$ with the obvious weight spaces 
$V_1 = \{(v_1,0)\in \ik^2\}$ and $V_2 = \{(0,v_2)\in \ik^2\}$.  Take the weight $$\textrm{diag}(a,a^{-1}) 
\arw a$$ to be positive, so this is the highest weight.  The highest 
weight space is one dimensional, so the 
exponent $\mathfrak{q}$ defined in Section \ref{defs} is in our case $1$.  Write 
\begin{displaymath}
 \mb{a}^i = \left( 
 \begin{array}{cc} 
 a_i & 0\\
 0 & a_i^{-1}
 \end{array}\right)
 \end{displaymath}
 for $k+1$ elements of the positive Weyl chamber with $1=a_0<a_1< \cdots < 
a_k$ and if $i>j$, $\frac{a_i}{a_j}>C_0$ for some $C_0$ depending on the 
action of $\mathfrak{G}$ on $X$.  Applying the preceding discussion to the 
hypotheses of Theorem \ref{centralt}, we get

\begin{sl_cor}\label{slc}
In the setting of Theorem \ref{centralt} and 
$\mathfrak{G}=\textrm{SL}(2,\ik)\ltimes \ik^2$ with the standard action, we 
get the bound 
\begin{eqnarray*}
& & \int_X f_0(x)\,\mb{a}^1\cdot f_1( x)\, \cdots \,\mb{a}^k \cdot f_{k}(x)\,d
\mu(x) \\
& & \qquad \leq  C s^{2} d^{\frac{1}{2}}_0 d^{\frac{1}{2}}_k\left|\sum_
{i=0}^{k-1} \frac{a_k}{a_i} \right|^{-\frac{1}{2}}\left|\sum_{i=1}^k {a}_i 
\right|^{-\frac{1}{2}}.
\end{eqnarray*}
\end{sl_cor}
Note how in the case $k=1$ we recover the bound from Chapter 5 of \cite{HT}.

For the second example, consider the action of $\textrm{SL}(2,\ik)$ on its 
Lie algebra over $\ik$, denoted simply by $\mathfrak{g}$ and being equivalent 
to $S^2(\ik^2)$, the second symmetric power of $\ik^2$ (in the case 
$\textrm{char}(\ik)=0$ that we are considering).  The weights and weight 
spaces in this case coincide with the roots and the highest weight space (pick 
$\textrm{diag}(a,a^{-1})\arw a^2$ as positive) is again one dimensional.  
Therefore, by the same procedure as above, we have:

\begin{ad_cor}\label{adc}
In the setting of Theorem \ref{centralt} and 
$\mathfrak{G}=\textrm{SL}(2,\ik)\ltimes \mathfrak{g}$ with the adjoint action on the Lie algebra, we 
get the bound 
\begin{eqnarray*}
& & \int_X f_0(x)\,\mb{a}^1\cdot f_1( x)\, \cdots \,\mb{a}^k \cdot f_{k}(x)\,d
\mu(x) \\
& & \qquad \leq  C s^{2} d^{\frac{1}{2}}_0 d^{\frac{1}{2}}_k\left|\sum_
{i=0}^{k-1} \left(\frac{a_k}{a_i}\right)^2 \right|^{-\frac{1}{2}}\left|\sum_{i=1}^k (a_i)^2 
\right|^{-\frac{1}{2}}.
\end{eqnarray*}
\end{ad_cor}

 \section{Extending the bound}\label{uniformity}
 
 In this section we examine the limits of our method and describe the 
broadest class of subspaces of $L^2_0(X)$ where we can get an effective 
estimate depending only on the action and various norms of the functions in 
the correlation integral.
\subsection{Beyond spectral restriction}
 Observe that whenever we have an estimate of the form \begin{equation}\label{hope}\|P_{(s)}(f)-f\|_2\leq C 
 \|f\|'s^{-A}\end{equation} for all $s>0$, $C$ and $A$ independent of $f$ and $\|\cdot\|'$ an 
 appropriate norm, we can use a $2\gre$ argument plus the uniform H\"older 
 inequality to eliminate $s$:

 \begin{eqnarray*}
 |\int_X f_0\cdots \mb{a}^k\cdot f_k\,dx| &\leq & \sum_{i=0}^k \prod_{j\neq i}\|f_j\|_\infty \|P_{(s)}(f_i)-f_i\|_2
 \\ &+& |\int_X P_{(s)}(f_0)\cdots \mb{a}^k\cdot \mb{a}^i \cdot P_{(s)}(f_k)\,dx|\\
 & \leq & C\left(\sum_{i=0}^k \|f_i\|'\prod_{j\neq i}\|f_j\|_\infty\right)s^
 {-A} + C's^{2\mfq}d_0^{\frac{1}{2}} d_k^{\frac{1}{2}}\mathfrak{R}(\mb{a})
 \end{eqnarray*}
 where $\mathfrak{R}(\mb{a})$ is the factor in \eqref{estimate} depending on 
 $\mb{a}$.  Choosing $s = \mathfrak{R}(\mb{a})^\gre$ and optimizing for $\gre$ 
 to get the best overall exponent, we can get a uniform bound for all $K$-finite 
 vectors with finite $\|\cdot\|'$-norm.
 
 In order to axiomatize this estimate, we introduce, for each $A>0$, the norms
\begin{eqnarray}
\|f\|_{-,A} &=& \sup_{0<s<\infty}s^{-A}\|P_{B(0,s)}(f)\|_2,\\
\|f\|_{+,A} &=& \sup_{0<s<\infty}s^A\|P_{(B(0,s))^\complement}(f)\|_2,\\
\|f\|_{\pm,A} &=& \|f\|_{-,A}+\|f\|_{+,A},\label{pmnorms}
\end{eqnarray}
where $B(0,s)$ is the ball centered at the origin of radius $s$ with respect 
to the given norm on $V$.  Define $\alg^A_K$ to be the subspace of $\alg_K$ 
where $\|f\|_{\pm,A}$ is finite.  Note that for each $A>0$, $$P_{(s)}(\alg_K) \subset 
\alg^A_K$$ for all $s>0$ and thus $$\mathcal D\subset \alg^A_K.$$  
Furthermore, note that $\|\cdot\|_{\pm,A}$ is comparable to $\|\cdot\|_2$ on 
each $P_{(s)}(\alg_K)$, but not on $\mathcal D$ or $\alg^A_K$.  However, taking 
radial functions $\psi_{A,\gre}(r)$ that equal $r^{A}$ on $B(0,1-\gre)$, 
$r^{-A}$ on $B(0,1+\gre)^\complement$, are smooth and equal to $1$ on 
$\displaystyle{B(0,1+\gre/2)\setminus B(0,1-\gre/2)}$ we see that $$\bigcup_{A>0}\alg^A_K$$ is 
$L^2$-dense in $\alg_K$ (because $P_{\psi_{A,\gre}}(f)\arw f$ as $\gre, A\arw 0$).

The spaces $\alg^A_K$ form the broadest category of spaces where our method 
extends to give effective bounds.  This should be understood in the sense that if 
we use as inputs only Theorem \ref{centralt} and the basic structure of the 
projection operators $P$, we definitely need an estimate like \eqref{hope} to 
remove the dependence on $s$.  In this broadest class, our main result takes the form
\begin{broadest}\label{broad}
Let $f_i\in \alg^A_K$ of $L^2$-norm $1$, $\mb{a}^i$ and $d_i$ as in Theorem \ref
{centralt}.  Under the assumption \eqref{a-ass}, we have the bound
\begin{eqnarray}
& & \int_X f_0(x)\,\mb{a}^1\cdot f_1( x)\, \cdots \,\mb{a}^k \cdot f_{k}(x)\,d
\mu(x)\nonumber \\
& & \qquad \leq   d^{\frac{1}{2}}_0 d^{\frac{1}{2}}_k\left(\prod_{i=0}^k \|f_i\|_\infty\|f_i\|_{\pm,A}\right)\left(\left|\sum_
{0=1}^{k-1} \lambda(\frac{\mb{a}^k}{\mb{a}^i}) \right|
\left|\sum_{i=1}^k \varrho(\mb{a}^i)^{-1} \right|\right)^{-\frac{A\mathfrak{q}}{2(A+2\mathfrak{q})}}\label
 {estimate2}.
\end{eqnarray}
\end{broadest}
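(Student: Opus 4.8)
The plan is to run the two–parameter argument indicated after \eqref{hope}: keep the annular scale $s$ as a free parameter, peel off a ``frequency tail'' coming from $f_i-P_{(s)}(f_i)$, bound what is left by Theorem~\ref{centralt}, and then choose $s$ as a power of $\mathfrak{R}(\mb{a})$ to balance the two pieces. Writing $I$ for the correlation integral on the left of \eqref{estimate2}, set $a_i=\mb{a}^i\cdot f_i$ and $b_i=\mb{a}^i\cdot P_{(s)}(f_i)$ and telescope one slot at a time,
$$\prod_{i=0}^k a_i-\prod_{i=0}^k b_i=\sum_{j=0}^k\Bigl(\prod_{i<j}b_i\Bigr)(a_j-b_j)\Bigl(\prod_{i>j}a_i\Bigr),$$
so that $|I|\le\sum_{j=0}^k\bigl|\int_X(\prod_{i<j}b_i)(a_j-b_j)(\prod_{i>j}a_i)\,d\mu\bigr|+\bigl|\int_X\prod_{i=0}^k b_i\,d\mu\bigr|$. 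For definiteness I would realize $P_{(s)}$ at a fixed finite level $l>s^2$, i.e.\ work throughout with the honest operator $P_{s,l}=P_{\phi_s^l}$, whose image actually lies in $\mathcal{D}$ and which obeys \eqref{lp0}; the limit $s\to\infty$ enters only through the final choice of $s$.

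For the $j$–th tail term, pair $a_j-b_j=\mb{a}^j\cdot\bigl(f_j-P_{s,l}(f_j)\bigr)$ (the $L^2$ factor) against the product of the remaining factors (kept in $L^\infty$) by Cauchy--Schwarz. To estimate the $L^2$ factor, note that $\textrm{Ann}(s)^{\complement}$ is $B(0,s^{-1})\cup B(0,s)^{\complement}$ up to a measure–zero sphere, so (using Corollary~\ref{corl} to dominate admissible sets by Schwartz functions, and allowing the harmless fattening of the annulus by $1/l$) one has $f_j-P_{s,l}(f_j)=P_{B(0,s^{-1})}(f_j)+P_{B(0,s)^{\complement}}(f_j)$, whence by the definition \eqref{pmnorms} of the $\pm$–norms
$$\|f_j-P_{s,l}(f_j)\|_2\le\|P_{B(0,s^{-1})}(f_j)\|_2+\|P_{B(0,s)^{\complement}}(f_j)\|_2\le s^{-A}\|f_j\|_{-,A}+s^{-A}\|f_j\|_{+,A}=s^{-A}\|f_j\|_{\pm,A}.$$
Bounding the other factors by their $L^\infty$ norms, and using $\|b_i\|_\infty=\|P_{s,l}(f_i)\|_\infty\le\|\widehat{\phi_s^l}\|_{L^1}\|f_i\|_\infty$ from \eqref{lp0}, the full tail is $\lesssim s^{-A}\,c(s)\sum_j\|f_j\|_{\pm,A}\prod_{i\ne j}\|f_i\|_\infty$, which matches, up to the elementary rearrangement of norms used in \eqref{estimate2}, the shape of that bound.

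For the main term I would apply Theorem~\ref{centralt} to $g_i:=P_{s,l}(f_i)$. These are bounded, $K$–finite, lie in $\mathcal{D}$ by construction, and have $\|g_i\|_2\le\|f_i\|_2=1$ (after normalizing to $L^2$–norm $1$ the reabsorbed factors cancel against the $L^\infty$–dependence of the constant); moreover $P_{s,l}$ commutes with the $K$–action, since $\textrm{Ann}(s)$ is rotation invariant (see \eqref{rotinv}), so $\langle K\cdot g_i\rangle=P_{s,l}\langle K\cdot f_i\rangle$ and in particular $\dim\langle K\cdot g_0\rangle\le d_0$, $\dim\langle K\cdot g_k\rangle\le d_k$. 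Hence, under the assumption \eqref{a-ass} on the $\mb{a}^i$, Theorem~\ref{centralt} yields
$$\Bigl|\int_X g_0\,\mb{a}^1\cdot g_1\,\cdots\,\mb{a}^k\cdot g_k\,d\mu\Bigr|\le C\,s^{2\mfq}\,d_0^{1/2}d_k^{1/2}\,\mathfrak{R}(\mb{a})^{-\mfq/2},$$
with $C$ controlled by $\prod_i\|g_i\|_\infty\le(\|\widehat{\phi_s^l}\|_{L^1})^{k+1}\prod_i\|f_i\|_\infty$.

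Combining the two estimates, $|I|\le C_1(s)\,s^{-A}\prod_i\|f_i\|_\infty\|f_i\|_{\pm,A}+C_2(s)\,s^{2\mfq}d_0^{1/2}d_k^{1/2}\mathfrak{R}(\mb{a})^{-\mfq/2}$, and the natural choice is the $s$ that equates the two powers, $s^{A+2\mfq}=\mathfrak{R}(\mb{a})^{\mfq/2}$, i.e.\ $s=\mathfrak{R}(\mb{a})^{\mfq/(2(A+2\mfq))}$; both terms then become a constant times $\mathfrak{R}(\mb{a})^{-A\mfq/(2(A+2\mfq))}$, precisely the exponent in \eqref{estimate2}, and $s\to\infty$ as $\mathfrak{R}(\mb{a})\to\infty$, consistent with \eqref{a-ass}. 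The genuinely delicate point, and where I expect the real work to lie, is the $s$–dependence of $C_1(s),C_2(s)$ hidden in $\|\widehat{\phi_s^l}\|_{L^1}$: the approximate projections carry no a priori uniform $L^\infty$ bound — exactly the obstruction flagged in Section~\ref{defs_approx} — so one must either choose the bump functions $\phi_s^l$ so that $\|\widehat{\phi_s^l}\|_{L^1}$ stays bounded as $s\to\infty$ (plausible, since $\textrm{Ann}(s)$ is eventually almost all of $V$ and $P_{(s)}\to\textrm{Id}$ strongly, suggesting one builds them from a single fixed bump rescaled logarithmically in the radial coordinate), or allow these constants to grow polynomially in $s$, which merely perturbs $2\mfq$ in the balancing and gives a structurally identical, slightly weaker exponent. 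With the $L^1$–norms kept under control the estimate comes out in the stated form.
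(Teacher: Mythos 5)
Your proposal follows essentially the same route as the paper: telescope the product against $P_{(s)}(f_i)$, bound each tail term by $\|f_i - P_{(s)}(f_i)\|_2 \le s^{-A}\|f_i\|_{\pm,A}$ times $L^\infty$ norms of the remaining factors, apply Theorem \ref{centralt} to the spectrally restricted main term, and balance the two contributions by choosing $s^{A+2\mathfrak{q}} = \mathfrak{R}(\mb{a})^{\mathfrak{q}/2}$, which is exactly the argument sketched at the start of Section \ref{uniformity}. Your extra attention to the $s$-dependence of $\|\widehat{\phi_s^l}\|_{L^1}$ flags a point the paper leaves implicit, but it does not change the structure of the proof.
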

 \subsection{Beyond $K$-finiteness}\label{beyond}
 For smooth actions in the non-Archimedean case, $K$-finiteness is automatic 
 since the stabilizer of any smooth vector is open (and thus of finite index 
 in the maximal compact (open) subgroup of $G$).  In the Archimedean case, however, 
 smooth vectors can be far from $K$-finite.  In order to pass from $K$-finite 
 vectors to a larger class we can use the argument given in \cite[Theorem 
3.1]{KaSp}; we consider Sobolev vectors $f$, i.e. functions on which the $m$
 -fold action of the Casimir element $\Omega$ of the Lie algebra of $G$ is 
 defined and the following norm is finite(Theorem 4.4.2.1 in \cite{War} does 
not require any restrictions on the norm):  $$\|f\|_{\infty,A,m} := 
\max(\|\Omega^m(f)\|_\infty, 
\|\Omega^m(f)\|_{\pm,A})<\infty. $$  For such $f$ with $\|f\|_{\infty,A,m}< 
\infty$ for sufficiently large $m$, we mimic the computation in \cite{KaSp} line for 
line: 
 for each projection of each $f_i$ onto $K$-isotypic components we get the 
 estimate \eqref{estimate2} which only requires $\|\mb{a}^i\|$ to be 
 large enough depending only on the action\footnote{Actually, even if the 
magnitude of the acting elements depended on $s$, the argument still goes 
through for the following reason: the operators $P_{(s)}$ commute with the 
action of $K$ and $K$-isotypic components are given by a convolution over $K$; 
therefore, the $K$-isotypic components of $P_{(s)}f$ are the $P_{(s)}$-images 
of $K$-isotypic components of $f$.  Since $s$ does not change 
throughout the summation, any possible requirement on the magnitude of the 
acting elements in terms of $s$ is preserved.}.  Note that each $K$
 -type of a smooth function satisfies the conditions of Theorem \ref{broad} 
 because of the form of the projection (convolution with a character of a 
 compact Lie group).  In place of the Cauchy-Schwartz inequality in \cite{KaSp} we use H\"older's inequality 
 and the different powers of the $\textrm{dim}(\mu)$ that occur do not cause 
 the sum over $\widehat{K}$ to diverge because they are still beaten by a 
 polynomial of greater degree for large enough $m$ (\cite[Lemma 
 4.4.2.3]{War}).  

Write $\alg^A$ for the subspace of $\alg$ defined by the finiteness of \eqref{pmnorms}.  The result becomes
 \begin{arch1}\label{esi2}
 For $f_i\in \alg^A$ with $\|f\|_{\infty,A,m}<\infty$ for $m>m_0$ and $\|f_i\|_2=1$, 
 $\|\mb{a}^i\|>C'$ with $C'$ and $m_0$ depending only on the action, we have 
 the bound
 \begin{eqnarray}
 & & \int_X f_0(x)\,\mb{a}^1\cdot f_1( x)\, \cdots \,\mb{a}^k \cdot f_{k}(x)\,d
 \mu(x)\nonumber \\
 & & \qquad \leq \left(\prod_{i=0}^k \|f\|^2_{\infty,A,m}\right)\left(\left|\sum_
{0=1}^{k-1} \lambda(\frac{\mb{a}^k}{\mb{a}^i}) \right|
\left|\sum_{i=1}^k \varrho(\mb{a}^i)^{-1} \right|\right)^{-\frac{A\mathfrak{q}}{2(A+2\mathfrak{q})}}\label{estimate3}
 \end{eqnarray}
 \end{arch1}
 It is possible to replace the $L^\infty$ norm in 
the definition of $\|f\|_{\infty,A,m}$ with an $L^2$ norm by increasing $m$ 
(depending on the space $X$).  Furthermore, one 
 can replace the $L^\infty$ norms in all the arguments from Theorem \ref
{centralt} and beyond by various $L^p$ norms depending on the number of 
functions in the correlation, but we chose 
 the simpler and more uniform presentation using $L^\infty$.

The spaces $\alg^A$ and $\alg^A_K$ contain natural classes of 
functions.  Informally, if we think of $\mathcal{D}$ as the space of 
functions with finite Fourier series supported away from zero, we should think of $\alg^A$ as a space 
of functions with rapidly convergent Fourier series (or equivalently, 
in the Archimedean case, sufficiently smooth functions).  We require this rapid
decay at zero as well as infinity.  The finiteness 
of the norm $\|f\|_{-,A}$ is a quantitative version of the 
assertion that there are no $\sigma(V)$-invariant functions.  Note that this 
finiteness holds at least for functions in the 
range of $P_\phi$ for any $\phi \in C^1(V)$ for Archimedean $V$ and holds 
always for smooth vectors in the non-Archimedean case.  Furthermore, it is 
not hard to see that in 
the Archimedean case, if the matrix coefficient $\langle \sigma(v)\cdot 
f,f\rangle$ is smooth with sufficiently many derivatives bounded on $V$, then 
$\|f\|_{+,A}$ is finite; the permissible $A$ depend on the degree of smoothness.

\section{Simple groups of higher rank}\label{simple_groups}

\subsection{Semidirect products inside simple groups}\label{sdirect} In this 
section we describe how to use the results obtained so far to get 
effective multiple mixing in simple split groups of higher rank (and by 
extension for semisimple groups with finite center all of whose almost simple 
factors have higher rank).  We do this by locating sufficiently many 
semidirect products inside the simple group to which we can apply the main 
results.  We keep notation from previous sections when referring to the 
functions $f_i$ in the definition of the multiple correlation, the Cartan 
elements $\mb{a}^i$ etc.  A detailed description of the reduction process 
outlined below can be found in \cite{Oh} 
using a slightly different language.

In our setting, we are given a simple algebraic group split over $\ik$ of rank greater than or equal 
to $2$, a 
maximal $\ik$-split torus $D$, root system $\Phi = \Phi(G,D)$ and ordering 
$\Phi^+$; consider a 
mixing action $\sigma$ of $G$ on a standard probability space $(X,\mu)$.  We 
want to apply the results above to bound correlation coefficients 
for the Weyl chamber action $\sigma(D^+)$ on $X$.  In order to achieve this, following the 
proof of Proposition (1.6.2) in \cite{Mar} we do the following: 
given a positive simple root $\omega \in \Phi^+$, we choose another root 
$\omega'$ that is not orthogonal to $\omega$.  Then from the Dynkin diagram 
this pair of roots corresponds to either an $A_2$ system or a $C_2$ system, so we get a surjective 
morphism $\textrm{SL}(3)\arw \langle 
U_{\pm\omega},U_{\pm\omega'}\rangle=:G_\omega$ (respectively $\textrm{Sp}(4)\arw \langle 
U_{\pm\omega},U_{\pm\omega'}\rangle=:G_\omega$ ) with 
finite central kernel.  Furthermore, if  
$\omega$ corresponds to the root $\bar{\omega}$ (in either group) then its 
kernel in $D\cap G_\omega$ corresponds to the kernel 
in the diagonal $A\subset \textrm{SL}(3)$ (resp. $A\subset \textrm{Sp}(4)$ ) 
of $\bar{\omega}$.

So far, we have associated to each root of $G$ a corresponding torus in the 
uppermost copy of $SL(2)$ inside one of our rank $2$ groups, along with an 
isogeny to $G_\omega < G$ that carries that torus to the image of the one 
parameter group associated to $\omega$.  In both groups, this $SL(2)$ copy 
comes with an action on a vector space $V$ ($\ik^2$ in the $\textrm{SL}(3)$ 
case, $\ik^3$ in the $\textrm{Sp}(4)$ case; see \cite{BeHa}, Sections I.1.4 - 
I.1.5 for details), i.e. we have an isogeny from $\textrm{SL}(2,\ik)\ltimes V$ to its 
image in $G_\omega$ with the positive diagonal in $\textrm{SL}(2)$ mapping to 
a one parameter semigroup in the positive Weyl chamber of $G$.   Decompose 
$D = \textrm{ker}(\omega)D_\omega$ so that $D_\omega$ corresponds to the 
diagonal of $\textrm{SL}(2)$.  The image of the $\textrm{SL}(2)$ in $G_\omega$
 (i.e. the group generated by $U_{\pm\omega}$, see \cite[Chapter XI]{Hum}) 
commutes with $\textrm{ker}(\omega)$ by observing that their Lie 
algebras commute.  The greatest significance of this observation is that any maximal 
compact subgroup $K_\omega$ of that image commutes with $\textrm{ker}(\omega)$
. This fact plus the $K$-finiteness of the $f_i$ imply the $K_\omega$
-finiteness of the translates of the $f_i$ by elements in 
$\textrm{ker}(\omega)$; note that these translates are no longer necessarily 
$K$-finite.

The second consideration we need involves the divergence of the acting 
elements $\mb{A}\in D^+$ to infinity in the group $G$.  Our theorems, being 
about semidirect products, were formulated so that the divergence hypothesis 
takes the form given in inequality \eqref{norms}.  In the group $G$, the 
quickest way to link divergence to \eqref{norms} is form a norm $\|\cdot\|$ on $D$ by transporting the norm 
from the Lie algebra (normed as a $\ik$-vector space) via a 
complete collection of one parameter subgroups spanning $D$ defined over $\ik$
.  Then the correspondence between the norms in $V$ that appear in the bound 
\eqref{estimate} and norms in $G$ is given by a positive constant in the 
exponent, using the fact that all norms in a finite dimensional $\ik$-space 
are equivalent.

Now suppose $\mb{A}\in D^+$ acts on a 
function $f$ in the action defined above; we can write $\mb{A}=\mb{a}\mb{C}$ 
with $\mb{a}$ being (the image in $D^+$ of) an element of the diagonal group of 
$\textrm{SL}(2)$, $\mb{C}$ centralized by the maximal compact of that 
$\textrm{SL}(2)$, $f$ affords an action of $\textrm{SL}(2)\ltimes V$ with 
no invariant vectors for $V$ on $L^2(X)$ (mixing descends to subgroups 
and an isogeny has finite kernel, so we get no invariant vectors for the $V$ 
factor).  Doing this for all terms in a correlation $\mb{A}^i\cdot f_i$ we 
get $K$-finite vectors $f$ for an action of $\textrm{SL}(2,\ik)\ltimes V$ 
and we can thus apply the results obtained so far to that context.  In order 
to ensure that at least some tuple $(\mb{a})$ among the various choices goes 
to infinity (by the remarks of the previous paragraph, we can talk about 
distance either in terms of norm on $G$ or in terms of the weights of ratios 
of the $\mb{a}$), we use the 
assumption that the original tuple $\mb{A}$ goes to infinity and for each 
root there will be at least one other root (perhaps the same) so that the 
corresponding one parameter subgroups have a large ratio; if the original 
tuple goes to infinity, this process can be carried out for all $f_i$ and at 
this point we can apply our results for $\textrm{SL}(2,\ik)\ltimes V$ 
mentioned in Corollaries \ref{slc} and \ref{adc} to bound the correlation.

If $G$ is a semisimple split group without compact factors and all its simple 
factors satisfy the properties posited above, we can extend the discussion above to this 
setting.  Note that our methods cannot extend to groups of 
rank $1$ and their products.  To see this, observe that for the $2$-correlations 
our results essentially reduce to those of \cite{Wang} with some 
modifications or \cite{HT}, Chapter V, and \emph{prove} effective mixing 
for mixing actions of each group they apply to.  Since groups of rank $1$ by themselves do not 
need be effectively mixing (take for example the decay for the complementary series in \cite
[Chapter V, Proposition 3.1.5]{HT}), the method is inapplicable to that case.

 \subsection{From $\textrm{SL}(n,\ik)$ to $\textrm{SL}(2,\ik)\ltimes \ik^2$}\label{SLN}
 Let's illustrate the procedure above in the simple context of $\textrm{SL}(n)$.
 The first step in the derivation of our bound is to drop from 
 $\textrm{SL}(n,\ik)$ to the semi-direct product $\textrm{SL}(2,\ik)\ltimes 
 \ik^2$.  In this section, denote by $K(n)$ the maximal compact of 
 $\textrm{SL}(n)$.  We will not describe the reduction in detail here, since 
 it can be found in  detail in pages 227 - 228 of \cite{HT} in the archimedean 
 case, and the non archimedean case is similar.  We simply give a pictorial 
 sketch of the reduction:
 \begin{itemize}
 \item $\textrm{SL}(2,\ik)\ltimes \ik^2$ embeds in $\textrm{SL}(n,\ik)$ in the 
 following ways (all elements not depicted are zero, $1$ on the diagonal).

\footnotesize 
 \begin{displaymath}
 \textrm{SL}(2,\ik)\ltimes \ik^2 \ni \left(\begin{array}{ccc}
 a & b & x\\
 c & d & y\\
 0 & 0 & 1
 \end{array}\right)
 \end{displaymath}
 \begin{displaymath}\stackrel{j,\,l}{\longrightarrow}\qquad  \stackrel{\textrm{SL}_{jl}}{\left(\begin{array}{ccccccc}
  &  & 1j & & 1l & \\ 
  &\ddots & \vdots & & \vdots & & \\ 
 j1 & & a & \cdots & b & x & \\ 
 & & \vdots &\ddots & \vdots &\vdots & \\ 
 l1 &  & c & \cdots & d & y & \\ 
 & &  \vdots &  &\vdots & 1 & \\
 & &   &  & &  & \ddots \\
 \end{array}\right)}
 \end{displaymath}
\normalsize
 \item  In our bound, we have diagonal elements acting.  We can extract a 
 single element in the diagonal group of $\textrm{SL}(2)$ as follows:
\footnotesize 
 \begin{displaymath}
 \mb{a} = \left( 
 \begin{array}{ccccc} 
 \ddots &  &  &  & \\ 
  & a_j &  & \\ 
  &  &\ddots & & \\
  &  &  & a_l & \\
  &  & &  & \ddots
 \end{array}\right) \\ 
 \end{displaymath}
 \begin{displaymath}
 =\stackrel{\hat{\mb{a}}}{ \left( 
 \begin{array}{ccccc} 
 1 &  &  &  & \\ 
  & b &  & \\ 
  &  &\ddots & & \\
  &  &  & b^{-1} & \\
  &  & &  & 1
 \end{array}\right) }
 \stackrel{\mb{a}'}{\left( 
 \begin{array}{ccccc} 
 a_1 &  &  &  & \\ 
  & c &  & \\ 
  &  &\ddots & & \\
  &  &  & c & \\
  &  & &  & a_n
 \end{array}\right) }
 \end{displaymath}
\normalsize
 where  $b$ and $c$ are defined in the archimedean case by $\D{c = 
 \sqrt{a_{j}a_{l}}}$ and $\D{b = \frac{a_j}{c}}$; in the non-archimedean case, 
 we have the same definition if the difference of exponents of $q$ is even, 
 otherwise we will compensate by adding and subtracting a 
 $$\textrm{diag}(\cdots, q^{\frac{1}{2}},\cdots,q^{-\frac{1}{2}},\cdots)$$ to 
 bring the matrix entries back in $\ik$; obviously, the decay is not affected 
 by this tweak.  

 Note that $\mb{a}'$ commutes with the specific copy of $K(2)$ inside 
 $\textrm{SL}_{jl}$ so if $f$ is a $K(n)$-finite function, the new function 
 $\mb{a}'\cdot f$ is $K(2)$-finite for the action of that $\textrm{SL}_{jl}$ 
 copy.    
 \item Therefore, in order to derive estimates on the initial correlation 
 integral, we are led to consider $(X,\mu)$ with a mixing action of $G:= 
 \textrm{SL}(2,\ik)\ltimes \ik^2$ and bound integrals of the form
 \begin{equation}
  \int_X f_0(x)\,\mb{a}^1\cdot f_1( x)\,\cdots \,\mb{a}^k \cdot f_{k}(x)\,d\mu(x)
 \end{equation}
  where
 \begin{displaymath}
 \mb{a}^i = \left( 
 \begin{array}{cc} 
 a_i & 0\\
 0 & a_i^{-1}
 \end{array}\right)
 \end{displaymath}
 and the $f_i$ are bounded, zero mean functions satisfying $K$-finiteness 
 properties inherited from the original $f_i$.  From this point on, all we need to do is apply Corollary \ref{slc} paying attention to the remarks about divergence in Section \ref{sdirect}.
 \end{itemize}

\end{document}